\newtheorem{Notation}{Notation}
\newtheorem{Convention}{Convention}
\newtheorem{Def-Prop}{Definition-Proposition}
\newtheorem{Proof-Expl}{Proof-Explanation}
\newtheorem{theorem}{Theorem}[section]
\newtheorem{definition}[theorem]{Definition}
\newtheorem{lemma}[theorem]{Lemma}
\newtheorem{proposition}[theorem]{Proposition}
\newtheorem{examplecore}[theorem]{Example}}
\newtheorem{facts}[theorem]{Facts}
\newtheorem{remark}[theorem]{Remark}
\newcommand{\Id}{\ensuremath{\operatorname{\text{Id}}}}
\newcommand{\Spec}{\ensuremath{\operatorname{Spec}}}
\newcommand{\Hom}{\ensuremath{\operatorname{Hom}}}
\newcommand{\HOM}{\ensuremath{\mathcal Hom}}
\newenvironment{proof}{\noindent\textsc{Proof:}}{\hspace*{\fill}
$\blacksquare$\par\vspace{.1cm}} 
\newenvironment{example}{\begin{examplecore}}{\hspace*{\fill}
$\square$\par\vspace{.1cm}\end{examplecore}}
\newcommand{\mylabel}[1]{\label{#1}\ifthenelse{\boolean{finalversion}}{
  }{\marginpar{\tiny #1}}}  
\begin{document}
\title{ Global algebraic linear differential operators}
\maketitle
\author{Stefan G\"unther}
\begin{abstract} In this note, we want to investigate the question, given a projective algebraic scheme $X/k$\, and coherent sheaves $\mathcal F_1$\, and $\mathcal F_2$\, on $X$, when do global differential operators of order $N>0$\,, $D:\mathcal F_1\longrightarrow \mathcal F_2$\, exist. \\
After remembering the necessary foundational material, we prove one result in this direction, namely  that on $\mathbb P^n_k,\,\, n>1$, for each locally free sheaf $\mathcal E$\, and each $n\in \mathbb Z$\,, there exist  global differential operators $\mathcal E\longrightarrow \mathcal E(n)$\, of order $N$\,, if $N\geq N(n)$\, is sufficiently large. Also, we calculate the order of growth of differential operators of order $\leq N$\,, as $N$\, tends to infinity. As a final result, we prove, that, with the standard definition, for "most" projective algebraic varieties,  algebraic elliptic operators on a locally free sheaf do not exist. 
\end{abstract}

\tableofcontents
\section{Notation and Conventions and Basic Definitions}
\begin{Convention} By $\mathbb N$\, we denote the natural numbers, by $\mathbb N_0$\, the set of nonnegative integers.
\end{Convention}
We use multi index notation: if $x_1,...,x_n$\, is a set of variables, we denote  $\underline{x}^{\underline{m}}: x_1^{m_1}\cdot x_2^{m_2}\ldots x_n^{m_n}$\, where  $\underline{m}:=(m_1,m_2,\ldots ,m_n)$\, is a multi-index of lenght $n$. By $\mid \underline{m}\mid$\, we denot the number $m_1+...+m_n$\,.
The partial derivatives of a function $f(x_1,...,x_m)$\,  in the variables $x_i$\, we denote by 
$\partial^{\mid m\mid}/\partial \underline{x}^{\underline{m}}(f(x_1,...,x_m))$\,.
\begin{Notation} Let $X\longrightarrow S$\, be a morphism of schemes. By $\Omega^{(1)}(X/S)$\, we denote the usual sheaf of K\"ahler differentials. The ideal sheaf of the relative diagonal $\Delta_{X/S}\hookrightarrow X\times_SX$\, we denote by $\mathcal I_{X/S}$\,. Thus $\Omega^{(1)}(X/S)\cong \mathcal I_{X/S}/\mathcal I_{X/S}^2.$
\end{Notation}
We collect some basic facts about jet bundles and differential operators. Proofs and more detailed information can be found in \cite{Guenther2}.\\
\begin{facts} If $q: X\longrightarrow S$ is a morphism of finite type of noetherian schemes and $\mathcal E$\, is a  quasi coherent sheaf on $X$, for each $N\in \mathbb N_0\cup\{\mathbb N\}$\,, we denote by $\mathcal J^N(\mathcal E/S)$\,, the $N^{th}$ jet bundle of $-mathcal E$-, relative to $S$, which is a  quasi coherent sheaf on $X$. If $\mathcal E$\, is coherent, so is $\mathcal J^N(\mathcal E/S)$\, for $N\in \mathbb N_0$\,. -- Locally , if $\Spec B\longrightarrow \Spec A$\,  is the restriction of $q$ to Zariski-open subsets and $\mathcal E$\, restricted to the $\Spec B$\, corresponds to the $B$-module $M$, $\mathcal J^N(\mathcal E/S)$\, is  given  by 
$$\mathcal J^N(M/A)=B\otimes_AM/I_{B/A}\cdot (B\otimes_AM),$$ where $I_{B/A}$\, is the ideal in $B\otimes_AB$\, which is the kernel of the multiplication map. If $\mathcal E'\subset \mathcal E$\,  is a quasi coherent subsheaf, we denote by $\mathcal J^N(\mathcal E'/S)'$\, the image of the homomorphism $\mathcal J^N(i/S)$\,, where $i: \mathcal E'\hookrightarrow \mathcal E$\, is the inclusion. \\
 We denote the universal derivation $\mathcal E\longrightarrow \mathcal J^N(\mathcal E/S)$\, by $d^N_{\mathcal E/S}.$\\ 
If $\mathcal E=\mathcal O_X$\,, the jet sheaf  $\mathcal J^N(\mathcal O_X/S)$\, is an $\mathcal O_X$-algebra and we denote by $J^N(X/S):=\Spec_X\mathcal J^N(\mathcal O_X/S)$\,, the associated affine bundle over $X$ with projection $p_X=p_{1,X}: J^N(X/S)\longrightarrow X$\,. There is a second projection $p_{2,X}: J^N(X/S)\longrightarrow X$\, which corresponds to the universal derivation $d^N_{\mathcal O_X/S}$\,.
\end{facts}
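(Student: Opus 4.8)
The plan is to deduce every assertion in this collection of facts from one construction, the \emph{sheaf of relative principal parts}, together with its local description; most of the work is bookkeeping rather than a substantial argument. Write $p_1,p_2\colon X\times_S X\to X$ for the two projections and recall that $\mathcal I_{X/S}$ is the ideal sheaf of the diagonal. For $N\in\mathbb N_0$ set $\mathcal P^N_{X/S}:=\mathcal O_{X\times_S X}/\mathcal I_{X/S}^{N+1}$, a quasi-coherent sheaf of $\mathcal O_{X\times_S X}$-algebras whose support is contained in the diagonal, and for $N=\mathbb N$ set $\mathcal P^{\mathbb N}_{X/S}:=\varprojlim_{N}\mathcal O_{X\times_S X}/\mathcal I_{X/S}^{N+1}$. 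For a quasi-coherent sheaf $\mathcal E$ on $X$ define
\[
\mathcal J^N(\mathcal E/S):=p_{1,*}\bigl(p_2^{*}\mathcal E\otimes_{\mathcal O_{X\times_S X}}\mathcal P^N_{X/S}\bigr),
\]
with $\mathcal O_X$-module structure induced through $p_1$, and let $d^N_{\mathcal E/S}\colon\mathcal E\to\mathcal J^N(\mathcal E/S)$ be the morphism which over an affine patch sends a section $m$ to the class of $1\otimes m$. The first step is to verify, over $\Spec B\to\Spec A$ with $\mathcal E\leftrightarrow M$, that this reproduces exactly $\mathcal J^N(M/A)=(B\otimes_A M)/I_{B/A}^{N+1}\cdot(B\otimes_A M)\cong(B\otimes_A B/I_{B/A}^{N+1})\otimes_B M$, the residual module structure being the one coming from the left tensor factor; this is a direct unwinding of $p_{1,*}$ and of $\otimes$ over the affine $X\times_S X=\Spec(B\otimes_A B)$, and it shows in particular $\mathcal J^0(\mathcal E/S)=\mathcal E$, which pins down the exponent $N+1$. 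Since $\mathcal E\mapsto(\mathcal J^N(\mathcal E/S),d^N_{\mathcal E/S})$ is functorial in $\mathcal E$ and compatible with restriction to open subschemes, these local modules glue; functoriality also makes $\mathcal J^N(i/S)$ a morphism for any inclusion $i\colon\mathcal E'\hookrightarrow\mathcal E$, so its image sheaf $\mathcal J^N(\mathcal E'/S)'$ is well defined.

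The finiteness statements follow from the behaviour of $p_{1,*}$ in this situation. The morphism $p_1$ is of finite type between noetherian schemes, hence quasi-compact and quasi-separated, so $p_{1,*}$ carries quasi-coherent sheaves to quasi-coherent sheaves; this yields quasi-coherence of $\mathcal J^N(\mathcal E/S)$ for every $N\in\mathbb N_0$, and for $N=\mathbb N$ one checks locally that $\mathcal J^{\mathbb N}(\mathcal E/S)$ is the sheaf associated to the module $\varprojlim_N\mathcal J^N(M/A)$, which is the one point that needs a little care because inverse limits of quasi-coherent sheaves are not quasi-coherent in general. For $N\in\mathbb N_0$ and $\mathcal E$ coherent, $p_2^{*}\mathcal E\otimes\mathcal P^N_{X/S}$ is a coherent $\mathcal O_{X\times_S X}$-module killed by $\mathcal I_{X/S}^{N+1}$, hence supported on the closed subscheme $V(\mathcal I_{X/S}^{N+1})$; locally, writing $B=A[x_1,\dots,x_n]/J$ and $\xi_i:=1\otimes x_i-x_i\otimes 1$, the monomials $\underline{\xi}^{\underline{m}}$ with $|\underline m|\le N$ span $B\otimes_A B/I_{B/A}^{N+1}$ over the left copy of $B$, so $p_1$ restricted to $V(\mathcal I_{X/S}^{N+1})$ is a finite morphism, and the pushforward of a coherent sheaf along a finite morphism is coherent. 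The same computation explains why coherence genuinely fails for $N=\mathbb N$: the completed principal parts are usually not module-finite over $\mathcal O_X$.

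For $\mathcal E=\mathcal O_X$ one has $p_2^{*}\mathcal O_X\otimes\mathcal P^N_{X/S}=\mathcal P^N_{X/S}$, a sheaf of $\mathcal O_{X\times_S X}$-algebras supported on the diagonal, so $\mathcal J^N(\mathcal O_X/S)=p_{1,*}\mathcal P^N_{X/S}$ is a quasi-coherent $\mathcal O_X$-algebra, and $J^N(X/S):=\Spec_X\mathcal J^N(\mathcal O_X/S)$ is an affine bundle over $X$ with $p_{1,X}$ the structural projection coming from the left coprojection $\mathcal O_X\to\mathcal O_{X\times_S X}\twoheadrightarrow\mathcal P^N_{X/S}$. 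The second projection $p_{2,X}$ is the one induced by the \emph{right} coprojection $\mathcal O_X\to\mathcal O_{X\times_S X}\twoheadrightarrow\mathcal P^N_{X/S}$; since this composite is a ring homomorphism it defines a morphism of $\mathcal O_X$-algebras, hence a scheme morphism $J^N(X/S)\to X$, and on sections it is visibly $d^N_{\mathcal O_X/S}$, which is therefore not merely additive but multiplicative in this case. The step I expect to be the real obstacle is not an isolated computation but organising the gluing cleanly: the efficient route is to characterise $\mathcal J^N(\mathcal E/S)$ by its universal property, that $\Hom_{\mathcal O_X}\bigl(\mathcal J^N(\mathcal E/S),\mathcal F\bigr)$ is naturally isomorphic to the module of differential operators of order $\le N$ from $\mathcal E$ to $\mathcal F$ relative to $S$, since this simultaneously forces the local pieces to glue uniquely, identifies $d^N_{\mathcal E/S}$ as the universal such operator, and reduces the $N=\mathbb N$ statements to passage to the limit; one then only has to confirm, as above, that $p_{1,*}$ commutes with the inverse limit defining $\mathcal P^{\mathbb N}_{X/S}$, which it does because all sheaves in sight are supported on the noetherian diagonal.
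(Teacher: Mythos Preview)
The paper does not prove this statement at all: it is presented as a block of \textsc{Facts}, preceded by the sentence ``Proofs and more detailed information can be found in \cite{Guenther2}.'' So there is no proof in the paper to compare your proposal against. What you have written is the standard construction of the sheaf of principal parts and is essentially what one would find in the cited reference; your sketch is correct and well organised, and you even caught that the displayed local formula in the paper should read $I_{B/A}^{N+1}$ rather than $I_{B/A}$ (as your check $\mathcal J^0(\mathcal E/S)=\mathcal E$ confirms).

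One small caveat: your treatment of the case $N=\mathbb N$ (the pro-jet) is the weakest link. Pushforward, being a right adjoint, does commute with inverse limits of sheaves, so that step is fine; the genuine issue, which you flag but do not resolve, is whether the resulting inverse limit is quasi-coherent. Your closing justification ``because all sheaves in sight are supported on the noetherian diagonal'' does not by itself settle this, since inverse limits of quasi-coherent sheaves on a noetherian scheme need not be quasi-coherent. In practice one either works in the pro-category, or checks by hand on affines that $\varprojlim_N (B\otimes_A B/I_{B/A}^{N+1})\otimes_B M$ gives a module whose associated sheaf agrees with the sheaf-theoretic limit; either way this deserves one more line than you gave it. For the finite-$N$ statements, which are all the paper actually uses later, your argument is complete.
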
 
\begin{proposition}\mylabel{prop: P4}
 \begin{enumerate}[1]
\item Let $X\longrightarrow S$\, be a morphism of finite type between noetherian schemes. For each $N\in \mathbb N\cup\{\infty\}$\,, let $\mathcal J^{N}(-/S)$\, be the functor from quasi coherent $\mathcal O_X$-modules to quasi coherent $\mathcal J^{ N}(X/S)$-modules sending $\mathcal F$ to $\mathcal J^{ N}(\mathcal F/S)$\,. Then, this functor is right  exact and there is a canonical natural isomorphism $\mathcal J^{ N}(-/S)\stackrel{\cong}\longrightarrow p_2^*(-).$
\item If $\pi:X\longrightarrow S$\, is flat, then $\mathcal J^{\mathbb N}(-/S)$\, is an exact functor.
\item If $\pi: X\longrightarrow S$\, is a smooth morphism of schemes, then for each $N\in \mathbb N$\,, the functor $\mathcal J^N(-/S)$, sending quasi coherent $\mathcal O_X$-modules  to quasi coherent $\mathcal J^N(X/S)$-modules , is exact and equal to $(p^N_2)^*$\,.
\end{enumerate}
\end{proposition}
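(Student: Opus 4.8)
The strategy is to reduce all three statements to the local description of jet bundles recalled in the Facts, and then to recognize $\mathcal J^{N}(-/S)$ as a pullback along $p_{2}$. For Part~1, localize: over Zariski opens $\Spec B\to\Spec A$ with $\mathcal F$ corresponding to a $B$-module $M$, the Facts identify $\mathcal J^{N}(M/A)$ with $(B\otimes_{A}B/I_{B/A}^{N+1})\otimes_{B}M=:P^{N}_{B/A}\otimes_{B}M$, where $P^{N}_{B/A}$ is the ring of principal parts (replaced by the $I_{B/A}$-adic completion of $B\otimes_{A}B$ when $N=\infty$) and the tensor product is over its ``second factor'' $B$-module structure $\rho$. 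Since $\rho$ is exactly the structure through which $p_{2,X}$ is defined --- the one corresponding to the universal derivation $d^{N}_{\mathcal O_X/S}$ --- the module $P^{N}_{B/A}\otimes_{B}M$ is nothing but $p_{2}^{*}\widetilde M$. These identifications are natural in $M$ and compatible with localization, hence glue to a natural isomorphism $\mathcal J^{N}(-/S)\xrightarrow{\ \cong\ }p_{2}^{*}(-)$; right exactness then follows because any pullback functor is right exact.

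For Parts~2 and~3, by Part~1 it is enough to show that $p_{2,X}\colon J^{N}(X/S)\to X$ is flat --- in fact finite locally free for finite $N$ --- under the stated hypotheses. If $\pi$ is flat and $N=\infty$, flatness of $p_{2,X}$ can be tested on $\Spec\bigl(\widehat{B\otimes_{A}B}\bigr)\to\Spec B$, which factors as $\Spec\bigl(\widehat{B\otimes_{A}B}\bigr)\to\Spec(B\otimes_{A}B)\xrightarrow{\rho}\Spec B$: the second arrow is the base change of $\pi$ along itself, hence flat, and the first is flat because $B\otimes_{A}B$ is noetherian ($A$ noetherian and $A\to B$ of finite type) and $I_{B/A}$-adic completion of a noetherian ring is flat over it. This genuinely needs $N=\infty$: for finite $N$ the analogous map factors through the closed immersion $\Spec P^{N}_{B/A}\hookrightarrow\Spec(B\otimes_{A}B)$, and indeed $\mathcal J^{1}(-/S)$ already fails to be exact for $\pi$ flat but not smooth, since flatness of $P^{1}_{B/A}$ over $B$ would force $\Omega^{(1)}(B/A)$ to be flat, hence locally free, hence $B/A$ smooth. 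If instead $\pi$ is smooth and $N$ is finite, a choice of \'{e}tale coordinates gives $P^{N}_{B/A}\cong B[t_{1},\dots,t_{r}]/(t_{1},\dots,t_{r})^{N+1}$, which is finite free over $B$ via $\rho$; hence $p_{2}^{N}$ is finite locally free and $(p_{2}^{N})^{*}$ is exact.

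The main obstacle is the infinite level. For finite $N$ everything reduces to standard commutative algebra (right exactness of $\otimes$; local freeness of principal parts in the smooth case). At $N=\infty$ two points require care: first, one must fix the meaning of $\mathcal J^{\infty}$ and of the ``pullback'' $p_{2}^{*}$ so that the isomorphism of Part~1 holds for \emph{all} quasi-coherent sheaves and not merely coherent ones --- the naive inverse limit over $N$ does not commute with infinite direct sums, so the completed pullback is the correct object; second, one must invoke flatness of the $I_{B/A}$-adic completion of $B\otimes_{A}B$, which is precisely where the noetherian hypothesis on $S$ is indispensable.
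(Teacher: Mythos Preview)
Your argument is correct and self-contained, whereas the paper does not prove this proposition at all: it simply refers the reader to \cite{Guenther2}, section~3.5, Proposition~3.33. So there is no ``paper's own proof'' to compare against beyond that citation.

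A couple of remarks on your write-up. Your identification in Part~1 of $\mathcal J^{N}(M/A)$ with $P^{N}_{B/A}\otimes_{B,\rho}M$ is exactly the right reduction, and the recognition of this as $p_{2}^{*}$ is the content of the claim. For Part~2 your factorization $\Spec(\widehat{B\otimes_{A}B})\to\Spec(B\otimes_{A}B)\to\Spec B$ is clean; the only place one should pause is your own caveat that at $N=\infty$ one must take $\mathcal J^{\infty}(\mathcal F/S)$ to mean the pullback along $p_{2}$ on the formal completion (equivalently $\widehat{B\otimes_{A}B}\otimes_{B}M$) rather than the inverse limit of the finite-level jets, since these differ on non-coherent $\mathcal F$. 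This is a genuine ambiguity in the statement as written, and you are right to flag it. For Part~3, the \'etale-coordinate computation showing $P^{N}_{B/A}\cong B[t_{1},\dots,t_{r}]/(t_{1},\dots,t_{r})^{N+1}$ is standard; you might add one sentence noting that freeness holds for \emph{both} $B$-module structures (by the swap involution on $B\otimes_{A}B$, or by redoing the computation with $x_{i}\otimes 1-1\otimes x_{i}$), since it is the second structure $\rho$ that matters for $p_{2}^{*}$.

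Your side remark that finite-level exactness fails for merely flat $\pi$ (because flatness of $P^{1}_{B/A}$ forces $\Omega^{(1)}_{B/A}$ locally free) is a nice sanity check and worth keeping.
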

\begin{proof}(see \cite{Guenther2}[section 3.5, Proposition 3.33, p.19].
\end{proof}
 \begin{definition}\mylabel{def:D46} Let $X\longrightarrow S$\, be an arbitrary morphism of schemes, or more generally of algebraic spaces, and  $\mathcal F_i, i=1,2$\, be  quasi coherent sheaves on $X$. Then, a differential operator of order $\leq N$\, is an $\mathcal O_S$-linear map $D: \mathcal F_1\longrightarrow \mathcal F_2$\, that can be factored as
  $\mathcal F_1\stackrel{d^N_{\mathcal F/S}}\longrightarrow \mathcal J^N(\mathcal F_1/S)$\,
  and an $\mathcal O_X$-linear map $\widetilde{D}: \mathcal J^N(\mathcal F_1/S)\longrightarrow \mathcal F_2$\,. \\
  A differential operator of order $N$\, is a differential operator that is of order $\leq N$\, but not of order $\leq N-1$\,.
 \end{definition}
 Thus, in this situation, there is a 1-1 correspondence between differential operators $\mathcal F_1\longrightarrow \mathcal F_2$\, relative to $S$ and $\mathcal O_X$-linear maps $\mathcal J^N(\mathcal F_1/S)\longrightarrow \mathcal F_2$\,. 
\begin{facts} Let $q: X\longrightarrow S$\, be a morphism of finite type of noetherian schemes and $\mathcal F_1,\mathcal F_2$\, be  quasi coherent sheaves on $X$. By $DO^N_{X/S}(\mathcal F_1,\mathcal F_2)$\, we denote the $\mathcal O_S$-module of global differential operators $D: \mathcal F_1\longrightarrow \mathcal F_2$\, relative to $S$, which is by definition the quasi coherent $\mathcal O_X$-module
$$DO^N_{X/S}(\mathcal F_1,\mathcal F_2):=\Hom_{\mathcal O_X}(\mathcal J^N(\mathcal F_1/S), \mathcal F_2).$$
If $\mathcal F_1,\mathcal F_2$\, are coherent, so is $DO^N(\mathcal F_1,\mathcal F_2)$\,.\\
  If $S=\Spec k$\, is a point, we use the simplified notation $DO^N(\mathcal F_1,\mathcal F_2)$\,.
\end{facts}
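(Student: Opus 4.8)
The assertion is local on $X$, so the plan is to reduce at once to the affine situation $X=\Spec B$, $S=\Spec A$. Since $q$ is of finite type and $S$ is noetherian, $B$ is a finitely generated $A$-algebra, hence noetherian; write $\mathcal F_i=\widetilde{M_i}$ for $B$-modules $M_i$. By the local description of jet bundles recalled above, $\mathcal J^N(\mathcal F_1/S)$ restricts on $\Spec B$ to $\widetilde{\mathcal J^N(M_1/A)}$, which is a quasi coherent $\mathcal O_{\Spec B}$-module in general and, by the cited Facts (see \cite{Guenther2}), a \emph{coherent} one as soon as $M_1$ is a finitely generated $B$-module and $N$ is finite. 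Granting this, the statement follows from two elementary facts about modules over the noetherian ring $B$: (i) for a finitely presented $B$-module $P$ and an arbitrary $B$-module $Q$ the canonical map $\Hom_B(P,Q)\otimes_B B_f\to\Hom_{B_f}(P_f,Q_f)$ is an isomorphism for all $f\in B$, so that $\sheafHom_{\mathcal O_X}(\widetilde P,\widetilde Q)=\widetilde{\Hom_B(P,Q)}$ is quasi coherent and the local pieces glue to the global $\sheafHom$; and (ii) if moreover $Q$ is finitely generated, then $\Hom_B(P,Q)$ is a finitely generated $B$-module.

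Concretely, I would first invoke coherence of $\mathcal J^N(\mathcal F_1/S)$ when $\mathcal F_1$ is coherent and $N$ finite; since $X$ is noetherian, every finitely generated $\mathcal O_X$-module is finitely presented, so over a small enough affine one may choose a presentation $\mathcal O_X^{\oplus m}\to\mathcal O_X^{\oplus n}\to\mathcal J^N(\mathcal F_1/S)\to 0$. Applying the left-exact functor $\sheafHom_{\mathcal O_X}(-,\mathcal F_2)$ identifies $DO^N_{X/S}(\mathcal F_1,\mathcal F_2)=\sheafHom_{\mathcal O_X}(\mathcal J^N(\mathcal F_1/S),\mathcal F_2)$, locally, with $\ker\bigl(\mathcal F_2^{\oplus n}\to\mathcal F_2^{\oplus m}\bigr)$, the kernel of a morphism of quasi coherent $\mathcal O_X$-modules, which is again quasi coherent (and coherent, by noetherianity, when $\mathcal F_2$ is coherent). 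By fact (i) these local kernels are the restrictions of the global sheaf $\sheafHom_{\mathcal O_X}(\mathcal J^N(\mathcal F_1/S),\mathcal F_2)$, so $DO^N_{X/S}(\mathcal F_1,\mathcal F_2)$ is quasi coherent, and coherent whenever both $\mathcal F_1$ and $\mathcal F_2$ are. Passing to global sections recovers, via the correspondence of Definition~\ref{def:D46}, the $\mathcal O_S$-module of global differential operators.

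The only point that genuinely requires care --- and, I expect, the main obstacle --- is the finite presentation of $\mathcal J^N(\mathcal F_1/S)$: this is exactly what makes $\sheafHom_{\mathcal O_X}(\mathcal J^N(\mathcal F_1/S),-)$ commute with Zariski localization, and it is borrowed wholesale from the coherence statement in the cited Facts. For a merely quasi coherent, non-coherent $\mathcal F_1$ the jet bundle $\mathcal J^N(\mathcal F_1/S)$ is only quasi coherent, $\sheafHom$ need no longer localize, and the resulting $\mathcal O_X$-module of differential operators can fail to be quasi coherent; accordingly one should read the displayed definition of $DO^N_{X/S}$ with $\mathcal F_1$ coherent, or else write $\mathcal F_1=\colim_i\mathcal G_i$ as a filtered colimit of its coherent subsheaves, observe that $\mathcal J^N(-/S)$ --- being right exact and compatible with arbitrary direct sums, as one sees from the local formula $\mathcal J^N(M/A)=(B\otimes_A M)/I_{B/A}^{N+1}(B\otimes_A M)$ --- commutes with such colimits, and then argue with $\sheafHom(\colim_i\mathcal J^N(\mathcal G_i/S),\mathcal F_2)=\lim_i\sheafHom(\mathcal J^N(\mathcal G_i/S),\mathcal F_2)$ directly. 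In any case every application of $DO^N$ in this note takes $\mathcal F_1$ locally free, so the coherent case is all that is needed.
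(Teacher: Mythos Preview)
The paper offers no proof of this statement: it is a \textsc{Facts} block, i.e.\ a definition together with standard properties quoted without argument (coherence of $\mathcal J^N(\mathcal F_1/S)$ having already been recorded in the preceding \textsc{Facts}, with details deferred to \cite{Guenther2}). So there is nothing to compare your argument against; your task was really to supply what the paper omits.

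Your argument for the coherent case is correct and is the standard one: coherence of $\mathcal J^N(\mathcal F_1/S)$ gives local finite presentations, $\sheafHom_{\mathcal O_X}(-,\mathcal F_2)$ then localizes, and the resulting kernel of a map between finite direct sums of copies of $\mathcal F_2$ is coherent over a noetherian scheme. That is exactly what is needed everywhere in the paper, since every application takes $\mathcal F_1$ locally free (or at least coherent).

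You are also right to flag the general quasi-coherent assertion as the one genuine subtlety. As written, the paper claims $DO^N_{X/S}(\mathcal F_1,\mathcal F_2)$ is quasi coherent for arbitrary quasi coherent $\mathcal F_1$, but $\sheafHom_{\mathcal O_X}(\mathcal G,-)$ need not be quasi coherent when $\mathcal G$ is merely quasi coherent and not of finite presentation; your observation that one should either read the definition with $\mathcal F_1$ coherent, or pass to a filtered colimit and take the inverse limit on the $\sheafHom$ side (which produces an $\mathcal O_X$-module that is generally not quasi coherent), is well taken. This is a wrinkle in the paper's phrasing rather than in your proof.
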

\begin{facts} Let $q: X\longrightarrow S$\, be a  morphism of finite type between noetherian schemes and $\mathcal E$\, be a coherent sheaf on $X$. Then we denote for each $N\in \mathbb N$\, the standard jet bundle exact sequence by 
$$j^N(\mathcal E/S): \quad 0\longrightarrow \mathcal I_{X/S}^N\mathcal E/\mathcal I_{X/S}^{N+1}\cdot \mathcal E\longrightarrow \mathcal J^N(\mathcal E/S)\longrightarrow \mathcal J^{N-1}(\mathcal E/S)\longrightarrow 0.$$ If $\mathcal E$\, is locally free and the morphism $q: X\longrightarrow S$\, is smooth, the natural homomorphism
\begin{gather*}\mathcal E\otimes_{\mathcal O_X}\Omega^{(1)}(X/S)^{\otimes^s N}\cong \mathcal E\otimes_{\mathcal O_X}\mathcal (I_{X/S}/\mathcal I_{X/S}^2)^{\otimes^s N}\\
\longrightarrow  \mathcal I_{X/S}^{N}/\mathcal I_{X/S}^{N+1}\otimes_{\mathcal O_X}\mathcal E\longrightarrow \mathcal I_{X/S}^N\cdot \mathcal E/\mathcal I_{X/S}^{N+1}\cdot\mathcal E
\end{gather*}
is an isomorphism, which follows from the fact, that in this case $\Delta_{X/S}: X\hookrightarrow X\times_SX$\, is a regular embedding.
\end{facts}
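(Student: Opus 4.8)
\medskip
\noindent\textbf{Proof strategy.} The statement splits into two claims; the plan is to prove them separately. The exactness of $j^N(\mathcal E/S)$ should fall out almost tautologically from the local description of jet bundles recorded above and holds for any finite type morphism. The identification of its left term with $\mathcal E\otimes_{\mathcal O_X}\Omega^{(1)}(X/S)^{\otimes^s N}$, on the other hand, will genuinely use smoothness, through the regularity of the relative diagonal.

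For the short exact sequence I would work locally. Writing $\Spec B\to\Spec A$ for a local model, $M$ for the $B$-module corresponding to $\mathcal E$, and $I=I_{B/A}\subseteq B\otimes_AB$ for the kernel of the multiplication map, the idea is to realise $\mathcal J^N(\mathcal E/S)$ as $(B\otimes_AM)/I^{N+1}(B\otimes_AM)$ with its $\mathcal O_X$-structure through the left factor, and the canonical map $\mathcal J^N(\mathcal E/S)\to\mathcal J^{N-1}(\mathcal E/S)$ as reduction modulo $I^N(B\otimes_AM)$; its kernel is then by construction $I^N(B\otimes_AM)/I^{N+1}(B\otimes_AM)$, which is precisely the local model of $\mathcal I_{X/S}^N\mathcal E/\mathcal I_{X/S}^{N+1}\mathcal E$. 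Since every arrow here is the canonical one, these local identifications are compatible with passing to smaller opens and glue to the asserted short exact sequence of quasi coherent $\mathcal O_X$-modules; right exactness alone, incidentally, is already a special case of Proposition~\ref{prop: P4}. I expect no difficulty in this part.

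The real work is the second claim. First I would recall that a smooth morphism $q$ has a regularly embedded relative diagonal: Zariski-locally $q$ factors as an \'etale morphism $X\to\mathbb{A}^d_S$ followed by the projection ($d$ the relative dimension), so $\mathcal I_{X/S}$ is locally generated by the $d$ elements $x_i\otimes 1-1\otimes x_i$, which form a regular sequence, while $\mathcal I_{X/S}/\mathcal I_{X/S}^2\cong\Omega^{(1)}(X/S)$ is locally free of rank $d$. Then I would invoke the standard fact that the associated graded ring of an ideal generated by a regular sequence is the symmetric algebra on its conormal module, i.e. that the canonical surjection of graded $\mathcal O_X$-algebras $\Sym^\bullet_{\mathcal O_X}(\mathcal I_{X/S}/\mathcal I_{X/S}^2)\to\bigoplus_{N\ge 0}\mathcal I_{X/S}^N/\mathcal I_{X/S}^{N+1}$ is an isomorphism; in degree $N$, after substituting $\mathcal I_{X/S}/\mathcal I_{X/S}^2\cong\Omega^{(1)}(X/S)$, this is exactly the asserted isomorphism $\Omega^{(1)}(X/S)^{\otimes^s N}\stackrel{\cong}\longrightarrow\mathcal I_{X/S}^N/\mathcal I_{X/S}^{N+1}$. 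Finally I would tensor with $\mathcal E$: since $\mathcal E$ and $\mathcal I_{X/S}^N/\mathcal I_{X/S}^{N+1}$ are both locally free, $-\otimes_{\mathcal O_X}\mathcal E$ is exact, and a local trivialization $\mathcal E\cong\mathcal O_X^{\oplus r}$ identifies $\mathcal I_{X/S}^N\mathcal E/\mathcal I_{X/S}^{N+1}\mathcal E$ with $(\mathcal I_{X/S}^N/\mathcal I_{X/S}^{N+1})\otimes_{\mathcal O_X}\mathcal E$ compatibly with the natural maps, which gives the claim.

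The one place where genuine content enters --- what I would call the main obstacle --- is the implication ``$q$ smooth $\Rightarrow$ $\operatorname{gr}_{\mathcal I_{X/S}}\mathcal O_{X\times_SX}$ is the symmetric algebra on $\Omega^{(1)}(X/S)$'': this bundles together the regularity of the diagonal and the statement that powers of a regular-sequence ideal carry no relations beyond the obvious ones. Both are classical (the second by induction on the length of the regular sequence, or alternatively by feeding the sequences $j^N$ back in over the universal base $\mathbb{A}^d_S$), so in the end the argument is an assembly of known facts; I would cite them and carry out the bookkeeping.
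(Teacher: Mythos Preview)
Your proposal is correct and matches the paper's own treatment: the paper states this block as ``Facts'' without proof, offering only the one-line justification that the second isomorphism ``follows from the fact, that in this case $\Delta_{X/S}: X\hookrightarrow X\times_SX$ is a regular embedding,'' which is exactly the mechanism you unpack. Your local computation for the exactness of $j^N(\mathcal E/S)$ and your reduction of the kernel identification to the standard fact $\operatorname{gr}^\bullet_{\mathcal I_{X/S}}\mathcal O_{X\times_SX}\cong\Sym^\bullet_{\mathcal O_X}(\mathcal I_{X/S}/\mathcal I_{X/S}^2)$ for a regularly embedded diagonal are the expected arguments, and nothing more is needed.
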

\begin{Notation}If $q: X\longrightarrow S$\, is a smooth morphism of finite type of noetherian schemes  and $\mathcal E_1$\, and $\mathcal E_2$\, are locally free sheaves on $X$,  for each $N\in \mathbb N$\, we denote the standard exact sequence of differential operators by
\begin{gather*} do^N_{X/S}(\mathcal E_1,\mathcal E_2):\quad 0\longrightarrow DO^{N_1}_{X/S}(\mathcal E_1,\mathcal E_2)\longrightarrow DO^{N}_{X/S}(\mathcal E_1,\mathcal E_2)\\
\longrightarrow \Hom_{\mathcal O_X}(\mathcal E_1,\mathcal E_2)\otimes_{\mathcal O_X}\mathcal T^1(X/S)^{\otimes^s N}\longrightarrow 0,
\end{gather*} which is $\Hom_{\mathcal O_X}(j^N(\mathcal E_1/S),\mathcal E_2).$
\end{Notation}
\section{Introduction}
 In this note, we want to investigate the question, given a complete  algebraic projective scheme $X/k$ and a coherent sheaves  $\mathcal E_1,\mathcal E_2$\, on $X$, under which conditions does there exist a differential operator of order $N$,  $D: \mathcal E_1\longrightarrow \mathcal E_2$\,. We in particular treat the case where $X=\mathbb P^n_k, n> 1$\, is projective $n$-space over $k$\,.  One result (see \prettyref{prop:P67}), is that for each $n\in \mathbb Z$\, and each locally free sheaf $\mathcal E$\, on $\mathbb P^n_k$\, there exists for $N>>0$\, a global linear partial differential operator $D: \mathcal E\longrightarrow \mathcal E(n)$\, of order $N$. This is in contrast to the case of $\mathcal O_X$-linear homomorphisms $\phi: \mathcal E\longrightarrow \mathcal E(n)$\, where for $n<< 0$\, thanks to Serre duality no global homomorphisms exist. We have in addition that for fixed $N>0$\, and given nontorsion coherent $\mathcal O_X$-modules $\mathcal F_1,\mathcal F_2$\, there exists a global section $D\in \Gamma(X,DO^N(\mathcal F_1,\mathcal F_2(n)))$\, for $n>>0$\, which does not lie in $\Gamma(X, DO^{N-1}(\mathcal F_1,\mathcal F_2(n))$\, (see \prettyref{lem:L21}). This is a consequence of Serre vanishing and the fact, that the inclusion of coherent sheaves $DO^{N-1}(\mathcal F_1,\mathcal F_2)\subsetneq DO^N(\mathcal F_1,\mathcal F_2)$\, is strict, thanks to the result proven in \cite{Guenther2}[section 3.7, Proposition 3.44, pp. 26-29]. \\ We also calculate for fixed locally free $\mathcal E_1,\mathcal E_2$\, the number of global sections $\Gamma(\mathbb P^n_k, DO^N(\mathcal E_1,\mathcal E_2))$\, as $N$ tends to infinity. \\
 Next, we study the behavior of differential operators $D: \mathcal E\longrightarrow \mathcal E$\ on arbitrary smooth projective $X/k$\, , with respect to the Harder-Narhasimhan filtration  (HN-filtration for short) of $\mathcal E$\,.
 Our main result is \prettyref{prop:P60}, which says, that if $X$\, is not uniruled, then $D$ respects the Harder-Narhasimhan-filtration of $\mathcal E$\,. If moreover, $HN^{\bullet}(\Omega^{(1)}(X/k))$\, is the HN-filtration of the cotangent sheaf and the minimal slope $\mu_{min}(\Omega^{(1)}(X/k))>0$\,, then $D$ respects the HN-filtration and the differential operator $gr^iHN(\mathcal E)\longrightarrow gr^iHN(\mathcal E)$\, is $\mathcal O_{X}$-linear (see \prettyref{prop:P60}).\\
 Finally we study the question of global elliptic differential operators on locally free sheaves. We show that they exist on abelian varieties but on smooth projective varieties with $\mu_{min}(\Omega^{(1)}(X/k))>0$\, they do not exist at all. In \cite{Indexsatz} , an algebraic index theorem has been proved, and the result proved here puts an end to speculuations that there could exist an algebraic index theorem equivalent to the Hirzebruch-Riemann-Roch theorem. 
\section{Global differential operators on coherent sheaves}
If $X/k$ is a complete algebraic scheme, or , more generally a complete algebraic space  over a field $k$\, and $\mathcal F$  is a coherent sheaf on $X$, the question arises, are there global differential operators $\mathcal F\longrightarrow \mathcal F$\,? To get a feeling for this subtle question, we prove a few lemmas and give some examples. 
\begin{lemma}\mylabel{lem:L21} Let $X/k$\, be a projective scheme, $\mathcal F_1,\mathcal F_2$\, be  coherent nontorsion sheaves on $X$ and $\mathcal O_X(1)$\, be an ample invertible sheaf on $X$. Then, for each $N\in \mathbb N$\,  there is $M=M(\mathcal F_1,\mathcal F_2,N)$\, such that for all $m\geq M$\, there is a differential operator of order $N$,  $D:\mathcal F_1\longrightarrow \mathcal F_2(m)$.\\
In particular for $m>>0$\, there always exist non-$\mathcal O_X$-linear operators.
\end{lemma}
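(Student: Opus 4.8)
The plan is to view a differential operator $\mathcal F_1 \to \mathcal F_2(m)$ of order $\leq N$ as a global section of the coherent sheaf $DO^N(\mathcal F_1,\mathcal F_2)(m)$, to read off the condition ``order exactly $N$'' as nonvanishing of its image in the symbol quotient sheaf, and then to force such a section to exist for all large $m$ by Serre vanishing and Serre's global generation theorem. The only nontrivial input beyond that is the strictness of the jet filtration.

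First I would record the twist compatibility $DO^N(\mathcal F_1,\mathcal F_2(m)) \cong DO^N(\mathcal F_1,\mathcal F_2)(m)$, valid for every $m$ and likewise with $N$ replaced by $N-1$: indeed $\mathcal J^N(\mathcal F_1/k)$ does not involve $\mathcal F_2$, and for the invertible sheaf $\mathcal O_X(m)$ the natural map $\sheafHom_{\mathcal O_X}(\mathcal J^N(\mathcal F_1/k),\mathcal F_2)\otimes_{\mathcal O_X}\mathcal O_X(m) \to \sheafHom_{\mathcal O_X}(\mathcal J^N(\mathcal F_1/k),\mathcal F_2\otimes_{\mathcal O_X}\mathcal O_X(m))$ is an isomorphism (check locally, where $\mathcal O_X(m)$ is trivial). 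By \prettyref{def:D46} and the $1$-$1$ correspondence following it, a global differential operator $\mathcal F_1 \to \mathcal F_2(m)$ of order $\leq N$ is the same thing as an element of $\Gamma(X,DO^N(\mathcal F_1,\mathcal F_2)(m))$; applying the left exact functor $\sheafHom_{\mathcal O_X}(-,\mathcal F_2)$ to the canonical surjection $\mathcal J^N(\mathcal F_1/k)\twoheadrightarrow\mathcal J^{N-1}(\mathcal F_1/k)$ from the sequence $j^N(\mathcal F_1/k)$ produces a canonical inclusion of coherent sheaves $DO^{N-1}(\mathcal F_1,\mathcal F_2)\hookrightarrow DO^N(\mathcal F_1,\mathcal F_2)$, and (using the compatibility of the universal derivations with the projections) such a $D$ has order $\leq N-1$ exactly when it lies in the image of $\Gamma(X,DO^{N-1}(\mathcal F_1,\mathcal F_2)(m))$ in $\Gamma(X,DO^N(\mathcal F_1,\mathcal F_2)(m))$. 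Writing $\mathcal Q := DO^N(\mathcal F_1,\mathcal F_2)/DO^{N-1}(\mathcal F_1,\mathcal F_2)$ for the cokernel (a coherent sheaf), it therefore suffices to produce, for $m\gg 0$, a section of $DO^N(\mathcal F_1,\mathcal F_2)(m)$ with nonzero image in $\Gamma(X,\mathcal Q(m))$.

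The essential point is that $\mathcal Q\neq 0$, i.e. that the inclusion $DO^{N-1}(\mathcal F_1,\mathcal F_2)\subsetneq DO^N(\mathcal F_1,\mathcal F_2)$ is strict; this is where the nontorsion hypotheses on $\mathcal F_1$ and $\mathcal F_2$ enter, and it is precisely \cite{Guenther2}[section 3.7, Proposition 3.44, pp. 26-29]. Granting it, I twist the short exact sequence $0\to DO^{N-1}(\mathcal F_1,\mathcal F_2)\to DO^N(\mathcal F_1,\mathcal F_2)\to\mathcal Q\to 0$ of coherent sheaves by $\mathcal O_X(m)$ and pass to cohomology. By Serre vanishing there is $M_1$ with $H^1(X,DO^{N-1}(\mathcal F_1,\mathcal F_2)(m))=0$ for all $m\geq M_1$, so $\Gamma(X,DO^N(\mathcal F_1,\mathcal F_2)(m))\to\Gamma(X,\mathcal Q(m))$ is surjective for such $m$; by Serre's global generation theorem there is $M_2$ with $\mathcal Q(m)$ globally generated, hence $\Gamma(X,\mathcal Q(m))\neq 0$ (as $\mathcal Q\neq 0$), for all $m\geq M_2$. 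Put $M:=\max(M_1,M_2)$. For every $m\geq M$, lifting a nonzero element of $\Gamma(X,\mathcal Q(m))$ to $D\in\Gamma(X,DO^N(\mathcal F_1,\mathcal F_2)(m))=\Gamma(X,DO^N(\mathcal F_1,\mathcal F_2(m)))$ gives, by the previous paragraph, a differential operator $D\colon\mathcal F_1\to\mathcal F_2(m)$ of order $\leq N$ that is not of order $\leq N-1$, hence of order $N$. This $M=M(\mathcal F_1,\mathcal F_2,N)$ proves the lemma.

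For the final assertion, note that $\mathcal J^0(\mathcal F_1/k)=\mathcal F_1$ with $d^0_{\mathcal F_1/k}=\id$, so $DO^0(\mathcal F_1,\mathcal F_2)=\sheafHom_{\mathcal O_X}(\mathcal F_1,\mathcal F_2)$ and the operators of order $\leq 0$ are exactly the $\mathcal O_X$-linear maps. Applying the above with any fixed $N\geq 1$, the operators $\mathcal F_1\to\mathcal F_2(m)$ of order $N$ produced for $m\geq M(\mathcal F_1,\mathcal F_2,N)$ are in particular not of order $\leq 0$, hence not $\mathcal O_X$-linear. The single genuine obstacle in this argument is the nonvanishing $\mathcal Q\neq 0$; once that structural fact is available from \cite{Guenther2}, everything else is a routine application of Serre's theorems on projective schemes.
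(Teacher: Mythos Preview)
Your proof is correct and follows essentially the same approach as the paper: both invoke \cite{Guenther2}[Proposition 3.44] for the strict inclusion $DO^{N-1}(\mathcal F_1,\mathcal F_2)\subsetneq DO^N(\mathcal F_1,\mathcal F_2)$, twist the resulting short exact sequence by $\mathcal O_X(m)$, and then use Serre vanishing and global generation to lift a nonzero section of the quotient for $m\gg 0$. Your write-up is in fact slightly more precise about which $H^1$ must vanish and about the twist compatibility holding for all $m$, but the underlying argument is the same.
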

\begin{proof} By \cite{Guenther2}[section 3.7, Proposition 3.44, pp. 26-29], for any $N\in \mathbb N$\,, the sheaf 
$$DO^{N}(\mathcal F_1,\mathcal F_2):=\HOM_X(\mathcal J^N(\mathcal F_1/k),\mathcal F_2)$$ is stricly larger then the sheaf $DO^{N-1}_X(\mathcal F_1,\mathcal F_2).$
 There is $M= M(\mathcal F_1,\mathcal F_2)$\, such that  
 \begin{gather*}DO^{N}(\mathcal F_1,\mathcal F_2)(m)=DO^N(\mathcal F_1,\mathcal F_2(m))\quad \text{and}\\
  DO^{N-1}_{\mathcal O_X}(\mathcal F_1,\mathcal F_2)(m)=DO^{N-1}(\mathcal F_1,\mathcal F_2(m))\quad
     \text{and the quotient sheaf}\\
   \mathcal Q(m,N)= DO^N(\mathcal F_1,\mathcal F_2(m))/(DO^{N-1}(\mathcal F_1,\mathcal F_2(m))\\
   =(DO^N(\mathcal F_1,\mathcal F_2)/DO^{N-1}(\mathcal F_1,\mathcal F_2))(m)
   \end{gather*}  are nonzero,  globally generated for $m\geq M$\, and have no higher cohomology.\\
   The sheaf $\mathcal Q(m.N)$\,  is for $\mathcal F_i, i=1,2$\, locally free  and $X/k$\, smooth the sheaf 
   $$ \mathcal Q(m,N):=\HOM_{\mathcal O_X}(\mathcal F,\mathcal F(m))\otimes S^N\mathcal T^1(X/k).$$  Then we get an exact sequence 
 \begin{gather*}0\longrightarrow H^0(X, DO^{N-1}(\mathcal F_1,\mathcal F_2(m)))\longrightarrow H^0(X, DO^N(\mathcal F_1,\mathcal F_2(m)))\\
 \longrightarrow H^0(X, Q(m,N))\longrightarrow 0.
 \end{gather*}
 All cohomology groups are nonzero (because the sheaves are nonzero and globally generated) and thus there exists a global differential operator $D_N\in H^0(X,DO^N(\mathcal F,\mathcal F(m)))$\, not contained in $H^0(X,DO^{N-1}(\mathcal F,\mathcal F(m)))$\,.  Putting $N=0$\, and observing that $DO^0(\mathcal F,\mathcal F(m))=\HOM_{\mathcal O_X}(\mathcal F,\mathcal F(m))$\, we get the last claim. 
 \end{proof}
 \begin{lemma}\mylabel{lem:L108} For each projective  morphism $f:X\longrightarrow S$\, there exists a locally free $\mathcal O_X$-module $\mathcal E$\, and a global differential operator $\mathcal E\longrightarrow \mathcal E$\, of arbitrary high order relative to $S$.
 \end{lemma}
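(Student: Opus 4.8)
The plan is to deduce \prettyref{lem:L108} from \prettyref{lem:L21} by a rank-two ``matrix'' construction whose order can be computed by hand. Fix $N\in\mathbb N$; since $f$ is projective there is a relatively ample invertible sheaf $\mathcal O_X(1)$ on $X$. I claim it suffices to produce, for $m\gg0$, a global differential operator $D\colon\mathcal O_X\longrightarrow\mathcal O_X(m)$ of order exactly $N$ relative to $S$. Granting this, set $\mathcal E:=\mathcal O_X\oplus\mathcal O_X(m)$, which is locally free, and let $\phi\colon\mathcal E\longrightarrow\mathcal E$ be the operator whose $\mathcal O_X\to\mathcal O_X(m)$ matrix entry equals $D$ and whose other three entries vanish. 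Since $\mathcal J^{N'}(\mathcal E/S)=\mathcal J^{N'}(\mathcal O_X/S)\oplus\mathcal J^{N'}(\mathcal O_X(m)/S)$ compatibly with the universal derivations, $\phi$ has order $\le N'$ if and only if each matrix entry does, i.e.\ if and only if $D$ has order $\le N'$; thus $\phi$ has order exactly $N$, and letting $N$ vary produces operators of arbitrarily high order.

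It remains to produce $D$. When $S=\Spec k$ this is exactly \prettyref{lem:L21} applied to $\mathcal F_1=\mathcal F_2=\mathcal O_X$ (which is nontorsion); over an arbitrary base one repeats that argument relatively over $S$. By \cite{Guenther2}[section 3.7, Proposition 3.44] the inclusion of coherent sheaves $DO^{N-1}_{X/S}(\mathcal O_X,\mathcal O_X)\subsetneq DO^{N}_{X/S}(\mathcal O_X,\mathcal O_X)$ is strict, so its coherent cokernel $\mathcal Q$ is nonzero (here we tacitly exclude the degenerate case where $f$ is unramified, in which there are no differential operators of positive order at all and the assertion is vacuous). Twisting the standard exact sequence $do^{N}_{X/S}(\mathcal O_X,\mathcal O_X)$ by $\mathcal O_X(m)$ and invoking Serre vanishing for the projective morphism $f$, one finds for $m\gg0$ a section of $DO^{N}_{X/S}(\mathcal O_X,\mathcal O_X(m))$ over $X$ not contained in $DO^{N-1}_{X/S}(\mathcal O_X,\mathcal O_X(m))$, and this is the desired $D$.

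When $f$ is moreover smooth one can dispense with the twisting altogether. In that case $\mathcal J^{N}(\mathcal O_X/S)$ is locally free — it is an iterated extension of the locally free sheaves $S^{j}\Omega^{(1)}(X/S)$, $0\le j\le N$ — so one may instead take $\mathcal E:=\mathcal J^{N}(\mathcal O_X/S)$ and for $\phi$ the composite of the canonical $\mathcal O_X$-linear augmentation $\mathcal J^{N}(\mathcal O_X/S)\twoheadrightarrow\mathcal O_X$ with the universal derivation $d^{N}_{\mathcal O_X/S}\colon\mathcal O_X\to\mathcal J^{N}(\mathcal O_X/S)$. This $\phi$ has order $\le N$, and on the canonical $\mathcal O_X$-linear sub-line-bundle $\mathcal O_X\hookrightarrow\mathcal J^{N}(\mathcal O_X/S)$ splitting the augmentation it restricts to $d^{N}_{\mathcal O_X/S}$ itself, which is not of order $\le N-1$ because $\mathcal J^{N}(\mathcal O_X/S)\neq\mathcal J^{N-1}(\mathcal O_X/S)$ (equivalently $\mathcal I_{X/S}^{N}/\mathcal I_{X/S}^{N+1}\cong S^{N}\Omega^{(1)}(X/S)\neq0$); hence $\phi$ has order exactly $N$.

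The step I expect to be the main obstacle is the relative form of \prettyref{lem:L21} invoked in the second paragraph: one must be sure that the ``new'' section found in the quotient $DO^{N}_{X/S}(\mathcal O_X,\mathcal O_X(m))/DO^{N-1}_{X/S}(\mathcal O_X,\mathcal O_X(m))$ genuinely comes from a section over $X$, not merely from a relatively globally generated sheaf. This is immediate when $S$ is affine — there a nonzero coherent sheaf has a nonzero global section, and $H^{i}(X,-)$ of the $m$-twists vanishes for $i>0$ and $m\gg0$ — and when $S$ is quasi-projective one twists in addition by $f^{*}$ of an ample sheaf on $S$; for a completely general noetherian base a small separate argument is needed. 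The remaining ingredients — the order computation for $\phi$, the strictness $DO^{N-1}\subsetneq DO^{N}$ from \cite{Guenther2}, and Serre vanishing — are routine.
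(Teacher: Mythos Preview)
Your argument is correct and follows essentially the same route as the paper: obtain an off-diagonal operator $\mathcal O_X\to\mathcal O_X(m)$ of the desired order from \prettyref{lem:L21} and assemble a $2\times2$ matrix on $\mathcal O_X\oplus\mathcal O_X(m)$ (the paper puts identities on the diagonal, you put zeros; this is immaterial for the order). Two points worth noting. First, you are more scrupulous than the paper about the base: \prettyref{lem:L21} is stated only for $X/k$, whereas \prettyref{lem:L108} is asserted for an arbitrary projective $f\colon X\to S$, and the paper simply invokes ``the previous proposition'' without comment; your reduction to affine $S$ via Serre vanishing is exactly the missing step. Second, your alternative construction in the smooth case, taking $\mathcal E=\mathcal J^{N}(\mathcal O_X/S)$ and $\phi=d^{N}_{\mathcal O_X/S}\circ(\text{augmentation})$, is a genuinely different and rather clean argument not in the paper; it avoids any appeal to ample twists or cohomology vanishing at the cost of the smoothness hypothesis.
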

 \begin{proof}  Let $\mathcal O_X(1)$\, be an $f$-ample invertible sheaf. By the previous proposition, there exists differential operators $D_{12}: \mathcal O_X(m)\longrightarrow \mathcal O_X(n+m)$\, for $n>>0$\, of arbitrary high order. Put $\mathcal E:=\mathcal O_X(m)\oplus \mathcal O_X(n+m)$\, and define $D$ to be given by the matrix of differential operators
 \[ D= 
 \begin{pmatrix}  \Id_{\mathcal O_X(m)} & D_{1,2}\\
                  0 & \Id_{\mathcal O_X(m+n)}
                  \end{pmatrix}
                  \]
     Then $D$ is a differential operator on $\mathcal E$\,. The Harder-Narasimhan filtration on $\mathcal E$\, is the filtration $\mathcal HN^{\bullet}\mathcal E$\, with  $\mathcal HN^1(\mathcal E)=\mathcal O_X(n+m)$\, and $gr^2\mathcal HN^{\bullet}(\mathcal E)=\mathcal O_X(m)$\, and  the induced differential operators on the graded pieces are $\mathcal O_X$-linear. By taking direct sums of $\mathcal O_X(n_i)$, $n_i$\, appropriately choosen, we can manage to get the rank of $\mathcal E$\, arbitrarily high.
 \end{proof}
 We have the following 
 \begin{proposition}\mylabel{prop:P30102} Let $q: X\longrightarrow S$\, be a proper morphism of noetherian schemes and $D: \mathcal E_1\longrightarrow \mathcal E_2$\, be a differential operator of coherent sheaves relative to $S$ of some order $\leq N$\,. Then, there are $\mathcal O_S$-linear maps 
 $$R^iD: R^iq_*\mathcal E_1\longrightarrow R^iq_*\mathcal E_2.$$
 \end{proposition}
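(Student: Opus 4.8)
The plan is to observe that a differential operator relative to $S$, although it is not $\mathcal{O}_X$-linear, is nevertheless a morphism of sheaves of $q^{-1}\mathcal{O}_S$-modules on $X$, and then to take higher direct images in that larger category. Indeed, by \prettyref{def:D46} the operator $D$ factors as $\mathcal{E}_1\xrightarrow{\,d^N_{\mathcal{E}_1/S}\,}\mathcal{J}^N(\mathcal{E}_1/S)\xrightarrow{\,\widetilde D\,}\mathcal{E}_2$ with $\widetilde D$ an $\mathcal{O}_X$-linear map, and the universal derivation $d^N_{\mathcal{E}_1/S}$ is, in the local description $\mathcal{J}^N(M/A)$ recalled above, the map $m\mapsto 1\otimes m$, which is $A$-linear; hence $D=\widetilde D\circ d^N_{\mathcal{E}_1/S}$ is $q^{-1}\mathcal{O}_S$-linear.

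I would then set up the derived pushforward in this generality. The functor $q_*$ from sheaves of $q^{-1}\mathcal{O}_S$-modules on $X$ to $\mathcal{O}_S$-modules on $S$ — where $q_*\mathcal{F}$ is regarded as an $\mathcal{O}_S$-module through the unit $\mathcal{O}_S\to q_*q^{-1}\mathcal{O}_S$ — is left exact on a category with enough injectives, so its right derived functors $R^iq_*$ are defined there. Since the forgetful functor from $\mathcal{O}_X$-modules to $q^{-1}\mathcal{O}_S$-modules is exact and carries injective $\mathcal{O}_X$-modules to flasque, hence $q_*$-acyclic, sheaves, for any $\mathcal{O}_X$-module $\mathcal{F}$ an $\mathcal{O}_X$-injective resolution computes $R^iq_*\mathcal{F}$ in both senses; therefore this $R^iq_*\mathcal{F}$ is canonically the ordinary higher direct image, with its ordinary $\mathcal{O}_S$-structure, and — the point of the argument — it depends only on $\mathcal{F}$ viewed as a $q^{-1}\mathcal{O}_S$-module. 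Applying $R^iq_*$ to the $q^{-1}\mathcal{O}_S$-linear morphism $D$ then produces the desired $\mathcal{O}_S$-linear maps
\[
 R^iD\;:=\;R^iq_*(D)\colon\quad R^iq_*\mathcal{E}_1\longrightarrow R^iq_*\mathcal{E}_2,
\]
which are functorial in $D$ and, when $D$ is $\mathcal{O}_X$-linear, reduce to the ordinary functoriality of $R^iq_*$; properness together with the noetherian hypothesis is only needed in order to know that these $\mathcal{O}_S$-modules are coherent.

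A second, more geometric route — closer in spirit to the jet-bundle formalism above — is to feed the two factors of $D$ to $R^iq_*$ separately. Here $R^iq_*\widetilde D\colon R^iq_*\mathcal{J}^N(\mathcal{E}_1/S)\to R^iq_*\mathcal{E}_2$ is unproblematic, and for the remaining arrow one writes $Y=J^N(X/S)$ with its two projections $p_{1,X},p_{2,X}\colon Y\to X$, which are finite since $X/S$ is of finite type, and $\pi:=q\circ p_{1,X}=q\circ p_{2,X}$; using the identification $\mathcal{J}^N(-/S)\cong p_{2,X}^{*}(-)$ recalled above, so that the $\mathcal{O}_X$-module $\mathcal{J}^N(\mathcal{E}_1/S)$ is $p_{1,X,*}p_{2,X}^{*}\mathcal{E}_1$, the Leray spectral sequence and affineness of the projections give $R^iq_*\mathcal{J}^N(\mathcal{E}_1/S)\cong R^i\pi_*(p_{2,X}^{*}\mathcal{E}_1)\cong R^iq_*(p_{2,X,*}p_{2,X}^{*}\mathcal{E}_1)$, and one then uses the map induced by the $\mathcal{O}_X$-linear adjunction unit $\mathcal{E}_1\to p_{2,X,*}p_{2,X}^{*}\mathcal{E}_1$, which as a morphism of sheaves of abelian groups is exactly $d^N_{\mathcal{E}_1/S}$. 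I expect the one genuinely delicate point, in either approach, to be the compatibility check underlying the second paragraph: that $R^iq_*$ applied to a $q^{-1}\mathcal{O}_S$-linear but non-$\mathcal{O}_X$-linear morphism of $\mathcal{O}_X$-modules still yields a well-defined $\mathcal{O}_S$-linear map of the ordinary higher direct images — equivalently, in the geometric formulation, that the two $\mathcal{O}_X$-module structures on $\mathcal{J}^N(\mathcal{E}_1/S)$ coming from $p_{1,X}$ and from $p_{2,X}$ restrict to one and the same $q^{-1}\mathcal{O}_S$-structure (because $q\circ p_{1,X}=q\circ p_{2,X}$), so that the $\mathcal{O}_S$-structure on $R^iq_*\mathcal{J}^N(\mathcal{E}_1/S)$ that one composes with $R^iq_*\widetilde D$ is unambiguous. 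Everything else is routine.
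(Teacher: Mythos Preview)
Your second, ``geometric'' route is essentially the paper's own proof: the paper factors $D$ through $\mathcal J^N(\mathcal E_1/S)$, notes that $d^N_{\mathcal E_1/S}$ is $\mathcal O_X$-linear for the \emph{second} module structure on the jet sheaf while $\widetilde D$ is $\mathcal O_X$-linear for the first, and then identifies $R^iq_*\mathcal J^N(\mathcal E_1/S)^{(1)}$ with $R^iq_*\mathcal J^N(\mathcal E_1/S)^{(2)}$ via $q\circ p_1=q\circ p_2$ and the affineness of the two projections $p_1,p_2:J^N(X/S)\to X$---exactly the mechanism you describe. Your first approach, working directly in the category of $q^{-1}\mathcal O_S$-modules and comparing derived functors through the observation that injective $\mathcal O_X$-modules are flasque and hence $q_*$-acyclic in the larger category, is a genuinely different and correct argument that the paper does not take. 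It is in some ways cleaner: it bypasses the bimodule bookkeeping, makes manifest that the construction depends only on the $\mathcal O_S$-linearity of $D$ rather than on the particular jet factorization, and immediately yields functoriality in $D$ and compatibility with composition. What the paper's route buys, by contrast, is that it stays entirely inside the jet-bundle formalism developed in the preliminaries and isolates the geometric reason ($q\circ p_1=q\circ p_2$) why the two a priori distinct $\mathcal O_S$-structures on $R^iq_*\mathcal J^N(\mathcal E_1/S)$ coincide.
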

 \begin{proof} There is the standard factorization of $D$ as 
 $$\mathcal E_1\stackrel{d^N_{\mathcal E_1/S}}\longrightarrow \mathcal J^N(\mathcal E_1/S)\stackrel{\widetilde{D}}\longrightarrow \mathcal E_2.$$
 The first map $d^N_{\mathcal E_1/S}$\, is $\mathcal O_X$-linear if the $\mathcal O_X$-bimodule $\mathcal J^N(X/S)$\, is considered as a coherent  $\mathcal O_X$-module via the second $\mathcal O_X$-module structure. For a  detailed  discussion of this, see \cite{Guenther2}[section 3.5, pp. 16-17, Lemma 3.27].  Hence, we get an $\mathcal O_S$-linear map 
 $$R^iq_*\mathcal E_1\stackrel{R^iq_*d^N_{\mathcal E_1/S}}\longrightarrow R^iq_*(\mathcal J^N(\mathcal E_1/S)^{(2)}).$$ Regarding $\mathcal J^N(\mathcal E_1/S)$\, with its first $\mathcal O_X$-module-structure, we get an $\mathcal O_S$-linear map 
 $$R^iq_*\widetilde{D}: R^iq_*(\mathcal J^N(\mathcal E_1/S)^{(1)})\longrightarrow R^iq_*\mathcal E_2.$$
 By \cite{Guenther2}[section 3.5, pp. 16-17, Lemma 3.27], we have 
 $$\mathcal J^N(\mathcal E_1/S)=p_{1*}(\overline{\mathcal O_S\otimes_{\mathcal O_S}\mathcal E_1)}= p_{2,*}\overline{\mathcal O_X\otimes_{\mathcal O_S}\mathcal E_1},$$
  where $p_1,p_2: J^N(X/S)\longrightarrow X$\, are the two projection morphisms, which are morphisms of $S$-schemes and where $J^N(X/S)=\Spec_X\mathcal J^N(X/S)$. 
 Since $q\circ p_1=q\circ p_2$\,  and $p_1$ and $p_2$\, are affine, we have 
 \begin{gather*}
 R^iq_*\mathcal J^N(\mathcal E_1/S)^{(2)}=R^i(q\circ p_2)_*\overline{\mathcal O_X\otimes_{\mathcal O_S}\mathcal E_1}\\
 =R^i(q\circ p_1)_*\overline{\mathcal O_X\otimes_{\mathcal O_S}\mathcal E_1} =R^iq_*\mathcal J^N(\mathcal E_1/S)^{(1)}.
 \end{gather*}
 Thus, we can compose $R^iq_*d^N_{\mathcal E_1/S}$\, with the map $R^iq_*{\widetilde{D}}$\, to get the  required $\mathcal O_S$-linear map 
 $R^iq_*D: R^iq_*\mathcal E_1\longrightarrow R^iq_*\mathcal E_2.$
\end{proof}
\subsection{Extensions of differential operators}
The basic question is the following: If 
$$0\longrightarrow \mathcal E_1\longrightarrow \mathcal E\longrightarrow \mathcal E_2\longrightarrow 0$$
is an exact sequence of coherent sheaves on an $S$-scheme $X/S$\, and $D_1:\mathcal E_1\longrightarrow \mathcal E_1$\, and $D_2:\mathcal E_2\longrightarrow \mathcal E_2$\, are differential operators on $\mathcal E_1,\mathcal E_2$\,, respectively, relative to $S$, can one find a differential operator $D:\mathcal E\longrightarrow \mathcal E$\, on $\mathcal E$\, fitting the exact sequence. We  want to show that this is not always the case.
\begin{proposition}\mylabel{prop:P20} Let $X/S$\, be an integral scheme of finite type and 
$$0\longrightarrow \mathcal E_1\longrightarrow \mathcal E\longrightarrow T\longrightarrow 0$$ be an exact sequence, where  $\mathcal E_1,\mathcal E$\, are locally free and $T$ is a torsion sheaf. Given a differential operator $D_1:\mathcal E_1\longrightarrow \mathcal E_1$, there is at most one differential operator $D:\mathcal E\longrightarrow \mathcal E$\,, extending $D_1$\,.
\end{proposition}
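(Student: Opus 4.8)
The plan is to show that the difference of two extensions is a differential operator on $\mathcal E$ that annihilates the subsheaf $\mathcal E_1$, and that such an operator must vanish because the sheaf of differential operators valued in the locally free (hence torsion‑free) sheaf $\mathcal E$ is itself torsion‑free on the integral scheme $X$, so it is detected at the generic point, where $\mathcal E_1$ and $\mathcal E$ agree.

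Concretely, suppose $D,D'\colon\mathcal E\longrightarrow\mathcal E$ are two differential operators relative to $S$, of orders $\leq N_1$ and $\leq N_2$ respectively, each extending $D_1$ in the sense that $D\circ i = i\circ D_1 = D'\circ i$, where $i\colon\mathcal E_1\hookrightarrow\mathcal E$ is the inclusion. Putting $N:=\max(N_1,N_2)$ and using the canonical surjections $\mathcal J^N(\mathcal E/S)\longrightarrow\mathcal J^{N_j}(\mathcal E/S)$ (from the standard jet exact sequences $j^{\bullet}$), which are compatible with the universal derivations $d^{\bullet}_{\mathcal E/S}$, I may regard both $D$ and $D'$ as global sections of the $\mathcal O_X$‑module $DO^N_{X/S}(\mathcal E,\mathcal E)=\sheafHom_{\mathcal O_X}(\mathcal J^N(\mathcal E/S),\mathcal E)$. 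Then $E:=D-D'$ is again a differential operator of order $\leq N$, i.e. a global section of $DO^N_{X/S}(\mathcal E,\mathcal E)$, and it satisfies $E\circ i=0$. It suffices to prove $E=0$ as a morphism of sheaves.

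Next I would restrict to the dense open $U:=X\setminus\support(T)$. Since $T$ is a torsion sheaf on the integral scheme $X$, its support is a proper closed subset, so $U$ is nonempty and contains the generic point $\eta$ of $X$. Over $U$ the map $i|_U$ is injective with cokernel $T|_U=0$, hence an isomorphism $\mathcal E_1|_U\stackrel{\sim}{\longrightarrow}\mathcal E|_U$; together with $E|_U\circ i|_U=0$ this forces $E|_U=0$. Finally I invoke torsion‑freeness: as $\mathcal E$ is locally free on the integral scheme $X$ it is torsion‑free, hence so is $\sheafHom_{\mathcal O_X}(\mathcal J^N(\mathcal E/S),\mathcal E)$ (a local section killed by a nonzero function would carry every local section of $\mathcal J^N(\mathcal E/S)$ into the torsion of $\mathcal E$, i.e. to $0$). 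For a torsion‑free coherent sheaf on an integral scheme the restriction map from global sections to the stalk at $\eta$ is injective, so $E|_U=0$ and $\eta\in U$ give $E_\eta=0$ and therefore $E=0$, that is $D=D'$.

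The argument has essentially one substantive ingredient, the torsion‑freeness of $DO^N_{X/S}(\mathcal E,\mathcal E)$, which is precisely what allows the (trivially true) equality of the two extensions over the dense open $U$ to propagate over all of $X$; the remaining steps — reducing $D$ and $D'$ to a common order, and observing that $\mathcal E_1=\mathcal E$ away from the torsion locus — are routine. I would also note that local freeness of $\mathcal E$ enters only through torsion‑freeness, so the uniqueness statement holds verbatim whenever $\mathcal E$ is torsion‑free.
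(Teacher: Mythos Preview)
Your argument is correct. It differs from the paper's proof in the way the vanishing of the difference $E=D-D'$ is established. The paper works on the jet side: it notes that the $\mathcal O_X$-linear lift $\widetilde{E}\colon\mathcal J^N(\mathcal E/S)\to\mathcal E$ kills the image $\mathcal J^N(\mathcal E_1/S)'$, hence factors through the cokernel, which by right exactness of the jet functor is $\mathcal J^N(T/S)$; a cited lemma says this is again torsion, and a map from a torsion sheaf to the locally free $\mathcal E$ is zero. You instead work on the $DO$ side: $E$ is a global section of the torsion-free sheaf $\sheafHom_{\mathcal O_X}(\mathcal J^N(\mathcal E/S),\mathcal E)$ that vanishes over the dense open $U=X\setminus\support(T)$, hence is zero. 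Your route is a bit more elementary in that it avoids the lemma on jets of torsion sheaves and the right-exactness statement, at the price of being slightly less explicit about \emph{why} the obstruction lives in the torsion part. Your closing remark that only torsion-freeness of $\mathcal E$ is used is a genuine (mild) strengthening; the paper's proof likewise only uses this, since the last step there is ``map from torsion to torsion-free is zero''.
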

\begin{proof} Suppose, $D_1$\, is of order $N$\, and there  are two differential operators $D,D'$\, of order $\leq N'$\, with $N'\geq N$\, on $\mathcal E$\, extending $D_1$\,. Without loss of generality, we may assume that $N=N'$\,. Considering the difference $D-D'$\,, we may assume that $D_1=0$\,. We look for an $\mathcal O_X$-linear homomorpism 
$$\widetilde{D}: \mathcal J^N(\mathcal E/k)\longrightarrow \mathcal E.$$ By assumption, $\widetilde{D}$\, restricted to $\mathcal J^N(\mathcal E_1/k)'\subset \mathcal J^N(\mathcal E/k)$\, is the zero homomorphism. But, by the right exactness of the jet module functor, the quotient module is the jet-module $\mathcal J^N(\mathcal E/\mathcal E_1/S)$\,. By \cite{Guenther2}[section 3.5, Lemma 3.30, p.18], this is a torsion sheaf, since $\mathcal E/\mathcal E_1\cong T$\, is so.
Thus $\widetilde{D}$\, factors through $\mathcal J^N(\mathcal E/k)/\mathcal J^N(\mathcal E_1/k)'\longrightarrow \mathcal E$\,. But the first sheaf is a torsion $\mathcal O_X$-module so $\widetilde{D}$\, must be the zero homomorphism.
\end{proof}
\begin{example} Let $(X,\mathcal O_X(H))$\, be a polarized projective scheme with $\mathcal O_X(H)$\, very ample and $H=\text{div}(s), s\in \Gamma(X,\mathcal O_X(H))$\, be a smooth section. Let $n\in \mathbb N$\, be choosen such that there exists a global differential operator $D_{12}: \mathcal O_X\longrightarrow \mathcal O_X(nH)$\,.
Consider the extension of locally free sheaves  
$$0\longrightarrow \mathcal O_X\oplus \mathcal O_X(nH)\longrightarrow \mathcal O_X(H)\oplus \mathcal O_X((n+1)H)\longrightarrow \mathcal O_H(H)\oplus \mathcal O_H((n+1)H)\longrightarrow 0.$$ The global  $\mathcal O_X$-($\mathcal O_H$-linear) endomorphisms of the last sheaf contain the endomorphisms in diagonal form and thus 
$$h^0(H, \Hom_H(\mathcal O_H(H)\oplus \mathcal O_H((n+1)H, \mathcal O_H(H)\oplus \mathcal O_H((n+1)H)))\geq 2.$$ 
By the previous proposition, the differential operator $D: \mathcal O_X\oplus \mathcal O_X(nH)\longrightarrow \mathcal O_X\oplus \mathcal O_X(nH)$\, constructed in \prettyref{lem:L108} has at most one extension to an operator $E$\, on $\mathcal O_X(H)\oplus \mathcal O_X((n+1)H)$\, so not every pair $(D, \phi)$\, where $\phi$\, is an $\mathcal O_H$-linear map on the right hand side extends.
\end{example}  

\subsection{Differential operators and semistable sheaves}
Let $X/k$\, be a projective  algebraic scheme of dimension $n$ and $\mathcal O_X(H)=\mathcal O_X(1)$\, be an ample invertibel sheaf on $X$. Let $\mathcal E$\, be a $\mu$-semi-stable sheaf on $X$ with respect to the polarization $\mathcal O_X(1)$\,. We want to investigate the question under which circumstances there exists a global differential operator $\mathcal E\longrightarrow \mathcal E$\,. We first give a criterion, when the answer is always negative.
\begin{proposition}\mylabel{prop:P13} Let $X/k, \mathcal E,\, \mathcal O_X(H)$\, be as above and suppose, that for some $m_0\in \mathbb N$\,, there are effective  Cartier divisors $D_i, i=1,...,N$\, with $D_i\cdot H^{n-1}>0$\, and a homomorphism with torsion kernel 
$$\bigoplus_{i=1}^{M(m)}\mathcal O_X(D_i)\longrightarrow \Omega^{(1)}(X/k)^{\otimes^s m}\,\,\forall m\geq m_0$$ Then , for every torsion free  coherent semistable sheaf $\mathcal E$,  every global differential operator $D: \mathcal E\longrightarrow \mathcal E$\,  has order $< m_0$\,. In particular if $m_0=1$\,, then every differential operator on $\mathcal E$\, is $\mathcal O_X$-linear. If $\mathcal E$\, is arbitrary torsion free, then each $D$ respects the $HN$-filtration and the differential operators on the graded pieces are $\mathcal O_X$-linear.
\end{proposition}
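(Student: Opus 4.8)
The plan is to reduce everything to the key exact sequence $do^N_{X/k}(\mathcal E,\mathcal E)$ and exploit the hypothesis on $\Omega^{(1)}(X/k)^{\otimes^s m}$ to kill the top-order symbol. Suppose first that $\mathcal E$ is $\mu$-semistable and torsion free, and let $D\colon\mathcal E\longrightarrow\mathcal E$ be a differential operator of order exactly $N\geq m_0$. By \prettyref{def:D46} it corresponds to an $\mathcal O_X$-linear map $\widetilde D\colon\mathcal J^N(\mathcal E/k)\longrightarrow\mathcal E$, and its principal symbol is the induced map $\sigma^N(D)\colon \mathcal I_{X/k}^N\mathcal E/\mathcal I_{X/k}^{N+1}\mathcal E\longrightarrow\mathcal E$. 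Using the isomorphism recorded in the Facts before the Notation box (valid since we may assume $X/k$ smooth on the locus where $\mathcal E$ is locally free, or argue on a dense open), this symbol is an $\mathcal O_X$-linear homomorphism
\[
\sigma^N(D)\colon \mathcal E\otimes_{\mathcal O_X}\Omega^{(1)}(X/k)^{\otimes^s N}\longrightarrow \mathcal E .
\]
I want to show $\sigma^N(D)=0$, for then $D$ has order $\leq N-1$ and descending induction finishes the first assertion; the case $m_0=1$ is the statement that every $D$ is $\mathcal O_X$-linear.

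To see $\sigma^N(D)=0$, twist the hypothesis: for $N\geq m_0$ there is a homomorphism with torsion kernel $\bigoplus_{i=1}^{M(N)}\mathcal O_X(D_i)\longrightarrow\Omega^{(1)}(X/k)^{\otimes^s N}$ with each $D_i\cdot H^{n-1}>0$. Tensoring with $\mathcal E$ (torsion free) and composing with $\sigma^N(D)$ we get, for each $i$, an $\mathcal O_X$-linear map $\mathcal E(D_i)\longrightarrow\mathcal E$, equivalently a global section of $\sheafHom_{\mathcal O_X}(\mathcal E,\mathcal E)\otimes\mathcal O_X(-D_i)$, equivalently a homomorphism $\mathcal E\longrightarrow\mathcal E(-D_i)$. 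Since $\mathcal E$ is $\mu$-semistable with respect to $H$ and $\mathcal E(-D_i)$ has strictly smaller slope — indeed $\mu(\mathcal E(-D_i))=\mu(\mathcal E)-D_i\cdot H^{n-1}/\operatorname{rk}\mathcal E<\mu(\mathcal E)$ — a nonzero map $\mathcal E\to\mathcal E(-D_i)$ is impossible (its image would be a subsheaf of $\mathcal E(-D_i)$, hence of slope $\leq\mu(\mathcal E(-D_i))<\mu(\mathcal E)\leq\mu(\text{image})$, a contradiction with semistability of $\mathcal E$, unless it vanishes). Hence each composite is zero; since the map from the direct sum has torsion cokernel onto a subsheaf generating $\Omega^{(1)}(X/k)^{\otimes^s N}\otimes\mathcal E$ generically and $\mathcal E$ is torsion free, $\sigma^N(D)$ itself vanishes. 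This proves the semistable case.

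For arbitrary torsion free $\mathcal E$, let $0=\mathcal E_0\subset\mathcal E_1\subset\cdots\subset\mathcal E_\ell=\mathcal E$ be the Harder--Narasimhan filtration, so each $gr^j=\mathcal E_j/\mathcal E_{j-1}$ is semistable with $\mu(gr^1)>\cdots>\mu(gr^\ell)$. I claim $D$ preserves the filtration. Composing the inclusion $\mathcal E_j\hookrightarrow\mathcal E$ with $D$ and then projecting gives a differential operator $\mathcal E_j\longrightarrow\mathcal E/\mathcal E_j$; one shows (by the same symbol analysis, using that $\mathcal E_j$ is semistable of slope $\geq\mu(\mathcal E_j)$ and every HN-quotient of $\mathcal E/\mathcal E_j$ has strictly smaller slope, and that the twist by $\mathcal O_X(-D_i)$ only decreases slopes) that no nonzero differential operator of any order can exist between a semistable sheaf and a torsion-free sheaf all of whose HN-slopes are strictly smaller — first the symbol vanishes by the slope inequality, then induction on the order reduces to the $\mathcal O_X$-linear case, which is classical HN-theory. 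Hence $D(\mathcal E_j)\subset\mathcal E_j$ for all $j$, so $D$ induces differential operators $\bar D_j\colon gr^j\longrightarrow gr^j$; applying the first part of the proposition (each $gr^j$ is semistable, $m_0=1$ available after twisting) shows each $\bar D_j$ is $\mathcal O_X$-linear.

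\emph{Main obstacle.} The delicate point is the passage from "each composite $\mathcal E(D_i)\to\mathcal E$ vanishes" to "$\sigma^N(D)=0$": one must be careful that the given map $\bigoplus\mathcal O_X(D_i)\to\Omega^{(1)}(X/k)^{\otimes^s N}$ need only have torsion kernel, not be surjective, so it is generic surjectivity plus torsion-freeness of $\mathcal E\otimes(-)$ that is doing the work, and one should check the symbol isomorphism from the Facts is available in the generality needed (it is stated for $\mathcal E$ locally free and $X/k$ smooth, so for singular $X$ or non-locally-free $\mathcal E$ one argues over the smooth locus where $\mathcal E$ is a vector bundle, which is a dense open with complement of codimension $\geq 1$, enough to conclude since a section of a torsion-free sheaf vanishing on a dense open vanishes). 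The rest is bookkeeping with slopes.
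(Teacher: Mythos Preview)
Your argument is essentially the paper's: kill the top symbol by composing with $\bigoplus\mathcal O_X(D_i)\to\Omega^{(1)}(X/k)^{\otimes^s N}$ and invoking the slope inequality for semistable sheaves, then descend on the order; for the HN part the paper peels off $HN^1(\mathcal E)$ and inducts on the filtration length, while you show $D(\mathcal E_j)\subset\mathcal E_j$ for each $j$ directly --- these are interchangeable.

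Two small slips to fix. The slope formula should be $\mu(\mathcal E(-D_i))=\mu(\mathcal E)-D_i\cdot H^{n-1}$, with no division by $\operatorname{rk}\mathcal E$ (twisting by a line bundle shifts the slope by that line bundle's degree). In the HN step, $\mathcal E_j$ is \emph{not} semistable; what you actually need (and what makes your argument go through) is the inequality $\mu_{\min}(\mathcal E_j(D_i))=\mu(gr^j)+D_i\cdot H^{n-1}>\mu(gr^{j+1})=\mu_{\max}(\mathcal E/\mathcal E_j)$, which forces every $\mathcal O_X$-linear map $\mathcal E_j(D_i)\to\mathcal E/\mathcal E_j$ to vanish.

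On your ``main obstacle'': you have correctly sensed the issue, but note that ``torsion kernel'' for a map out of a locally free sheaf just means \emph{injective}, which tells you nothing about generic surjectivity. The hypothesis as written is almost certainly a typo for ``torsion cokernel'' (compare \prettyref{rem:R2412}, which states the analogous condition as ``almost globally generated'', i.e.\ torsion cokernel). Both the paper's proof and yours silently use generic surjectivity to pass from vanishing of each composite $\mathcal E(D_i)\to\mathcal E$ to vanishing of $\sigma^N(D)$; with the hypothesis read correctly this step is fine.
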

\begin{proof} Let $\mathcal E$\, be a torsion free semistable sheaf on $X$ and $D: \mathcal E\longrightarrow \mathcal E$\, be a differential operator corresponding to an $\mathcal O_X$-linear map: $\widetilde{D}: \mathcal J^N(\mathcal E/k)\longrightarrow \mathcal E$\, with $N$ minimal  with $N>m_0$\,. From the jet bundle exact sequence  $j^N_{X/S}(\mathcal E_1)$, we get a homomorphism of coherent sheaves
$$(\bigoplus_{i=1}^N\mathcal O_X(D_i))\otimes_{\mathcal O_X} \mathcal E\longrightarrow \Omega^{(1)}(X/k)^{\otimes_s^N}\otimes_{\mathcal O_X}\mathcal E\stackrel{f}\longrightarrow \mathcal E.$$
If $f$ is not the zero homomorphism, there must be a direct summand $\mathcal E(D_i)$\, such that the above homomorphism restricted to $\mathcal E(D_i)$\, gives a nonzero homomorphism $\mathcal E(D_i)\longrightarrow \mathcal E$\,. Then $\mathcal E(D_i)$\, is semistable and 
$$\mu(\mathcal E(D_i))=\mu(E)+D_i\cdot H^{n-1}>\mu(\mathcal E).$$
 By \cite{Huybrechts-Lehn}[chapter 1, Proposition 1.2.7, p.11], this homomorphism must be zero. Thus, $\widetilde{D}$\, restricted to $\Omega^{(1)}(X/k)^{\otimes^s N}\otimes \mathcal E$\, is zero. By the same standard exact sequences for the jet modules $j^N_{X/S}(\mathcal E)$, $\widetilde{D}$\, factors over $\mathcal J^{N-1}(\mathcal E/k)$\,. This is in contradiction with the assumed minimality of $N$. Thus we must have had $N\leq m_0$\,.\\
To the last point, if $m_0=1$\, and $\mathcal E$\, is arbitrary torsion free, let $D: \mathcal E\longrightarrow \mathcal E$\, and  the Harder Narhasimhan filtration $HN^{\bullet}(\mathcal E)$ be given. Let $j$ be minimal such that $D(HN^1(\mathcal E))\subset HN^j(\mathcal E)$\, so we get $\overline{D}:gr^1HN(\mathcal E)\longrightarrow gr^jHN(\mathcal E)$\,, since by \cite{Guenther2}[section 3.6 Lemma 3.43, p.24], differential operators restricted to subsheaves and quotient sheaves are again differential operators). Argueing as above, we see that $j$ must be equal to $1$ and by the first part of the proposition, $\overline{D}$\, must be $\mathcal O_X$-linear. So we know that $D(HN^1(\mathcal E))\subseteq HN^1(\mathcal E)$. We get a differential operator $D': \mathcal E/HN^1(\mathcal E)\longrightarrow \mathcal E/HN^1(\mathcal E)$\, and we can continue by induction on the lenght of the $HN$-filtration, the start of the induction given by the first part of the proposition.
\end{proof}
\begin{proposition}\mylabel{prop:P14} Let $X/k$\, be a smooth projective algebraic scheme, $\mathcal O_X(H)$\, be a very ample invertible sheaf on $X$ and $\mathcal E$\, be a coherent sheaf on $X$. Suppose, $\Omega^{(1)}(X/k)$\, is globally generated, e.g., $X$ is an abelian variety. Then each differential operator $D: \mathcal E\longrightarrow \mathcal E$\, respects the Harder-Narasimhan-Filtration $HS^i(\mathcal E)$\,.
\end{proposition}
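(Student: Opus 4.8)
The plan is to follow the strategy of \prettyref{prop:P13} almost verbatim, with the global generation of $\Omega^{(1)}(X/k)$ playing the role played there by the hypothesis on maps out of a direct sum of positive line bundles. We argue by induction on the length $\ell$ of the Harder--Narasimhan filtration $0=HN^0\subsetneq HN^1\subsetneq\cdots\subsetneq HN^\ell=\mathcal E$ (this is the filtration denoted $HS^i(\mathcal E)$ in the statement); the case $\ell=1$, i.e. $\mathcal E$ semistable, is vacuous. First one reduces to $\mathcal E$ torsion free: the composite $T(\mathcal E)\hookrightarrow\mathcal E\xrightarrow{D}\mathcal E\twoheadrightarrow\mathcal E/T(\mathcal E)$ is a differential operator by \cite{Guenther2}[Lemma 3.43], hence factors through $\mathcal J^N(T(\mathcal E)/k)$, which is torsion by \cite{Guenther2}[Lemma 3.30], so it vanishes; thus $D$ preserves the torsion subsheaf, and the Harder--Narasimhan filtration of $\mathcal E$ is obtained from that of $\mathcal E/T(\mathcal E)$ by taking preimages, so it suffices to treat the torsion free case.

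So assume $\mathcal E$ torsion free, with semistable graded pieces $gr^i$ of slopes $\mu(gr^1)>\cdots>\mu(gr^\ell)$, and let $j$ be minimal with $D(HN^1)\subseteq HN^j$. Composing $HN^1\xrightarrow{D}HN^j\twoheadrightarrow gr^j$ and using that differential operators restrict to subsheaves and descend to quotients (\cite{Guenther2}[Lemma 3.43]), we obtain a differential operator $\overline D\colon gr^1=HN^1\to gr^j$, which is nonzero as soon as $j>1$. I claim any differential operator $gr^1\to gr^j$ with $j>1$ vanishes, which forces $j=1$, i.e. $D(HN^1)\subseteq HN^1$. The heart of the argument is a symbol computation, by induction on the order $N$: for $N=0$ one has $\Hom_{\mathcal O_X}(gr^1,gr^j)=0$ because $gr^1,gr^j$ are semistable with $\mu(gr^1)>\mu(gr^j)$ (\cite{Huybrechts-Lehn}[Proposition 1.2.7]); for $N>0$, from the jet exact sequence $j^N(gr^1/k)$ together with the smoothness of $X/k$ (so that $\Delta_{X/k}$ is a regular immersion), the kernel of $\mathcal J^N(gr^1/k)\twoheadrightarrow\mathcal J^{N-1}(gr^1/k)$ is a quotient of $gr^1\otimes\Omega^{(1)}(X/k)^{\otimes^s N}$. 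If the map $\widetilde{\overline D}\colon\mathcal J^N(gr^1/k)\to gr^j$ were nonzero on this kernel, then precomposing with the surjection from $gr^1\otimes\Omega^{(1)}(X/k)^{\otimes^s N}$ and then with a surjection $(gr^1)^{\oplus r}\twoheadrightarrow gr^1\otimes\Omega^{(1)}(X/k)^{\otimes^s N}$ — which exists because $\Omega^{(1)}(X/k)$, hence its symmetric tensor power, is globally generated — one would obtain a nonzero $\mathcal O_X$-linear map $gr^1\to gr^j$, contradicting the slope inequality. Hence $\widetilde{\overline D}$ kills the kernel, so $\overline D$ has order $\le N-1$, and the induction closes.

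Finally, $D$ now induces a differential operator on $\mathcal E/HN^1$ (\cite{Guenther2}[Lemma 3.43]), whose Harder--Narasimhan filtration is $\{HN^i/HN^1\}_{i\ge1}$ and has length $\ell-1$; by the inductive hypothesis this operator respects it, and pulling back one concludes that $D$ respects $HN^\bullet(\mathcal E)$. I expect no serious obstacle here — the global generation hypothesis is exactly what is needed to make the symbol-killing step go through, just as in \prettyref{prop:P13} — and the only point requiring a little care is that $\mathcal E$ (hence $gr^1$) is merely coherent, so one must invoke the canonical \emph{surjection} $gr^1\otimes\Omega^{(1)}(X/k)^{\otimes^s N}\twoheadrightarrow\mathcal I_{X/k}^N gr^1/\mathcal I_{X/k}^{N+1}gr^1$ rather than an isomorphism; everything else is insensitive to local freeness. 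The bookkeeping of the torsion reduction and of which graded piece $gr^j$ receives the symbol is the rest of the work.
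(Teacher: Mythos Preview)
Your proof is correct and follows essentially the same approach as the paper's own proof: pick the minimal $j$ with $D(HN^1)\subseteq HN^j$, and show the induced symbol map $\Omega^{(1)}(X/k)^{\otimes^s N}\otimes HN^1\to gr^j$ vanishes by precomposing with a surjection from copies of $HN^1$ (using global generation of $\Omega^{(1)}$) and invoking the slope inequality, then descend to $\mathcal E/HN^1$ and induct on the length of the filtration. Your explicit reduction to the torsion-free case and your remark that one only needs the surjection $gr^1\otimes S^N\Omega^{(1)}\twoheadrightarrow \mathcal I^N gr^1/\mathcal I^{N+1}gr^1$ rather than an isomorphism are points the paper leaves implicit, and your framing as an induction on $N$ rather than a minimal-$N$ contradiction is a cosmetic difference only.
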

\begin{proof} Let $D:\mathcal E\longrightarrow \mathcal E$\, and  the Harder-Narhasimhan filtration $HN^{\bullet}(\mathcal E)$\, be given. Let $j$ be minimal such that $D(HN^1(\mathcal E))\subseteq HN^j(\mathcal E)$\,. Then, we get a nonzero differential operator $\overline{D}: HN^1(\mathcal E)\longrightarrow gr^jHN(\mathcal E)$\,.
If $\Omega^{(1)}(X/k)$\, is globally generated, so is each symmetric power. Let  $N\in \mathbb N$\, be minimal such that $\overline{D}$ factors over $\mathcal J^N(\mathcal HN^1(\mathcal E)/k)$. We get homomorphisms of $\mathcal O_X$-modules
$$(\bigoplus \mathcal O_X)\otimes HN^1\mathcal E\twoheadrightarrow \Omega^{(1)}(X/k)^{\otimes^sN}\otimes_{\mathcal O_X}\mathcal E\longrightarrow gr^jHN(\mathcal E),$$
Restricting to each single direct summand $\mathcal O_X\otimes HN^1(\mathcal E)\cong HN^1(\mathcal E)\longrightarrow gr^jHN(\mathcal E)$\, the standard arguement shows, that this map must be zero unless $j=1$. Since this homomorphism is then zero for each direct summand,  so is the homomorphism
$$\Omega^{(1)}(X/k)^{\otimes^s N}\otimes_{\mathcal O_X}HN^1(\mathcal E)\longrightarrow gr^jHN(\mathcal E)$$  But then $\overline{D}$\, factors through $\mathcal J^{N-1}(HN^1(\mathcal E)/k)$\,, a contradiction. Thus, $j=1$\,, we take the quotient differential operator $D_1: \mathcal E/HN^1(\mathcal E)\longrightarrow \mathcal E/HN^1(\mathcal E)$\, and go on by induction.
\end{proof}
\begin{remark}\mylabel{rem:R2412} The same arguement shows that it suffices to assume that $\Omega^{(1)}(X/k)$\, is almost globally generated, i.e. that there is a homomorphism $\bigoplus \mathcal O_X\longrightarrow \Omega^{(1)}(X/k)$\,, with torsion cokernel.
\end{remark}
\begin{proposition}\mylabel{prop:P60} Let $(X,\mathcal O_X(1))$\, be a  smooth polarized projective variety over a field $k$ of characteristic zero and  suppose that $\mu_{min}(\Omega^{(1)}(X/k))\geq 0$\,. If $\mathcal E$\, is an arbitrary torsion free sheaf on $X$, then each differential operator $D:\mathcal E\longrightarrow \mathcal E$\, respects the $HN$-filtration, and  if $\mu_{min}(\Omega^{(1)}(X/k))>0$\,, the differential operator $\overline{D}_i: gr^iHN(\mathcal E)\longrightarrow gr^iHN(\mathcal E)$\, is $\mathcal O_X$-linear.
\end{proposition}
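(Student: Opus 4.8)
\emph{Proof strategy.} The plan is to run the argument of \prettyref{prop:P13} and \prettyref{prop:P14}, but now to keep track of actual slopes instead of merely the sign of $\mu_{min}(\Omega^{(1)}(X/k))$; the characteristic-zero hypothesis should enter exactly once, through the tensor product theorem for semistable sheaves. I would argue by induction on the length $\ell$ of $HN^\bullet(\mathcal E)$: for $\ell=1$ the sheaf $\mathcal E$ is semistable and nothing is to be proved, and for $\ell>1$ it suffices to show $D(HN^1(\mathcal E))\subseteq HN^1(\mathcal E)$, since then $D$ induces a differential operator on the torsion free quotient $\mathcal E/HN^1(\mathcal E)$ by \cite{Guenther2}[section 3.6, Lemma 3.43], whose $HN$-filtration has length $\ell-1$ with graded pieces $gr^2HN(\mathcal E),\dots,gr^\ell HN(\mathcal E)$, and the inductive hypothesis concludes.

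The heart of the matter would be a slope estimate for differential operators between semistable sheaves, which I would isolate as a lemma: if $\mathcal F,\mathcal G$ are torsion free and semistable and $E\colon\mathcal F\to\mathcal G$ is a nonzero differential operator of order $N$, then $\mu(\mathcal G)\ge N\cdot\mu_{min}(\Omega^{(1)}(X/k))+\mu(\mathcal F)$. For $N=0$ this is the classical fact that a nonzero $\mathcal O_X$-linear map between semistable sheaves cannot strictly lower the slope (\cite{Huybrechts-Lehn}[chapter 1, Proposition 1.2.7]). For $N\ge 1$ I would pass to the associated $\mathcal O_X$-linear map $\widetilde E\colon\mathcal J^N(\mathcal F/k)\to\mathcal G$; since $E$ is not of order $\le N-1$, $\widetilde E$ does not vanish on the subsheaf $\mathcal I_{X/k}^N\cdot\mathcal F/\mathcal I_{X/k}^{N+1}\cdot\mathcal F$ of $\mathcal J^N(\mathcal F/k)$ in the jet sequence $j^N(\mathcal F/k)$, and, using smoothness of $X/k$ to get a surjection $\Omega^{(1)}(X/k)^{\otimes^s N}\otimes_{\mathcal O_X}\mathcal F\twoheadrightarrow \mathcal I_{X/k}^N\cdot\mathcal F/\mathcal I_{X/k}^{N+1}\cdot\mathcal F$, this produces a nonzero homomorphism $\Omega^{(1)}(X/k)^{\otimes^s N}\otimes_{\mathcal O_X}\mathcal F\to\mathcal G$, whence $\mu_{min}(\Omega^{(1)}(X/k)^{\otimes^s N}\otimes_{\mathcal O_X}\mathcal F)\le\mu(\mathcal G)$. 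It then remains to bound the left side from below, and this is the one place where characteristic zero is used:
\[
\mu_{min}\bigl(\Omega^{(1)}(X/k)^{\otimes^s N}\otimes_{\mathcal O_X}\mathcal F\bigr)=\mu_{min}\bigl(\Omega^{(1)}(X/k)^{\otimes^s N}\bigr)+\mu(\mathcal F)\ge N\cdot\mu_{min}(\Omega^{(1)}(X/k))+\mu(\mathcal F),
\]
the equality by additivity of $\mu_{min}$ under tensor product, the inequality because $\Omega^{(1)}(X/k)^{\otimes^s N}$ is a direct summand of $\Omega^{(1)}(X/k)^{\otimes N}$ in characteristic zero.

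With the lemma in hand, both assertions drop out. Taking $j$ minimal with $D(HN^1(\mathcal E))\subseteq HN^j(\mathcal E)$, the induced map $gr^1HN(\mathcal E)\to gr^jHN(\mathcal E)$ is a nonzero differential operator between semistable sheaves, and the lemma together with $\mu_{min}(\Omega^{(1)}(X/k))\ge0$ gives $\mu(gr^jHN(\mathcal E))\ge\mu(HN^1(\mathcal E))=\mu_{max}(\mathcal E)$; since this is impossible for $j>1$, we get $j=1$, and then the induction above shows that $D$ respects $HN^\bullet(\mathcal E)$. If moreover $\mu_{min}(\Omega^{(1)}(X/k))>0$, then the induced operator $\overline D_i\colon gr^iHN(\mathcal E)\to gr^iHN(\mathcal E)$ cannot have order $N\ge1$, for the lemma with $\mathcal F=\mathcal G=gr^iHN(\mathcal E)$ would give $\mu(gr^iHN(\mathcal E))\ge N\cdot\mu_{min}(\Omega^{(1)}(X/k))+\mu(gr^iHN(\mathcal E))>\mu(gr^iHN(\mathcal E))$; hence $\overline D_i$ is $\mathcal O_X$-linear. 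The only genuine obstacle in all of this is the additivity of $\mu_{min}$ under tensor products, i.e.\ the tensor product theorem for semistable sheaves, which really does need the characteristic-zero hypothesis; the remaining steps are the bookkeeping already present in \prettyref{prop:P13} and \prettyref{prop:P14}.
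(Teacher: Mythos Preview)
Your argument is correct and follows the same overall architecture as the paper's proof: pick $j$ minimal with $D(HN^1(\mathcal E))\subseteq HN^j(\mathcal E)$, pass to the symbol to get a nonzero $\mathcal O_X$-linear map $\Omega^{(1)}(X/k)^{\otimes^s N}\otimes HN^1(\mathcal E)\to gr^jHN(\mathcal E)$, use slope inequalities coming from the tensor product theorem in characteristic zero to force $j=1$, and then induct on the length of the $HN$-filtration.

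The difference is in how the slope step is packaged. The paper does not isolate your lemma; instead it runs through the Harder--Narasimhan filtration of $\Omega^{(1)}(X/k)^{\otimes^s N}$ explicitly, showing piece by piece that each $gr^nHN(\Omega^{(1)}(X/k)^{\otimes^s N})\otimes HN^1(\mathcal E)\to gr^jHN(\mathcal E)$ vanishes because the source is semistable (tensor product theorem) of slope $\ge \mu(HN^1(\mathcal E))>\mu(gr^jHN(\mathcal E))$. Your version compresses this iteration into the single identity $\mu_{min}(A\otimes B)=\mu_{min}(A)+\mu_{min}(B)$ together with $\mu_{min}(\Omega^{(1)}(X/k)^{\otimes^s N})\ge N\cdot\mu_{min}(\Omega^{(1)}(X/k))$ via the direct-summand trick. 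This is a genuine streamlining: your lemma also gives the second assertion ($\overline D_i$ is $\mathcal O_X$-linear when $\mu_{min}>0$) in one line, whereas the paper repeats the filtration argument with $\mathcal E$ semistable. Conversely, the paper's hands-on iteration makes transparent exactly which semistability facts are used at each step and avoids having to state the $\mu_{min}$-additivity formula separately. Either way, the only substantive input beyond \prettyref{prop:P13}/\prettyref{prop:P14} is the characteristic-zero tensor product theorem, as you identify.
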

\begin{proof} The arguement is similar to the proof of the previous proposition.  Observe first, that if $\mu_{min}(\Omega^{(1)}(X/k))>0$\, then also $\mu_{min}(\Omega^{(1)}(X/k)^{\otimes^s m})>0$\,. Here we need that the characteristic of $k$\, is zero, because we need that the symmetric tensor product of semi-stable sheaves is again semistable. Now,  let $\mathcal E$\, be torsion free and $D: \mathcal E\longrightarrow \mathcal E$\,be given.  Let again $j$\, be minimal such that $D(HN^1(\mathcal E))\subset HN^j(\mathcal E)$\, so we get $\overline{D}: HN^1(\mathcal E)\longrightarrow gr^jHN(\mathcal E)$\,. Let $N\in \mathbb N$\, be minimal such that $\overline{D}$ factors over $\mathcal J^N(\mathcal HN^1(E)/k)$\,. We then get a nonzero homomorphism
$$\Omega^{(1)}(X/k)^{\otimes^s N}\otimes_{\mathcal O_X}HN^1(\mathcal E)\longrightarrow gr^jHN(\mathcal E).$$
We show by induction that this homomorphism is zero if $j>1$\, and $N>1$\,. Let $HN^{\bullet}(\Omega^{(1)}(X/k)^{\otimes^s N})$\, be the Harder-Narasimhan-filtration. Let $\Omega_{1,N}=HN^1$\, be the maximal destabilizing subsheaf. We get by restriction a homomorphism
$$\Omega_{1,N}\otimes HN^1(\mathcal E)\longrightarrow gr^j\mathcal HN^{\bullet}(\mathcal E).$$
$\Omega_{1,N}\otimes HN^1(\mathcal E)$ is semistable and 
\begin{gather*}\mu(\Omega_{1,N}\otimes HN^1(\mathcal E))=\mu(\Omega_{1,N})+\mu(\mathcal HN^1(\mathcal E))\\
>\mu_{min}(\Omega^{(1)}(X/k)^{\otimes^s N})+\mu(HN^1(\mathcal E))>\mu_j(\mathcal E),
\end{gather*} so by \cite{Huybrechts-Lehn}[chapter 1, Proposition 1.2.7, p.11], this homomorphism is zero. At this point, we also need that the characteristic is zero since we need the semi stability  of the tensor product. Suppose that for some $n\in \mathbb N$\, the restricted homomorphism
$$HN^n(\Omega^{(1)}(X/k)^{\otimes^s N})\otimes HN^1(\mathcal E)\longrightarrow gr^jHN(\mathcal E)$$ is nonzero. We then get a homomorphsim
$$\Omega_{n+1,N}\otimes \mathcal HN^1(\mathcal E):=gr^{n+1}HN^{\bullet}(\Omega^{(1)}(X/k)^{\otimes^s N})\otimes HN^1(\mathcal E)\longrightarrow gr^jHN(\mathcal E)$$
and the same as the previous arguement shows, since $\mu(\Omega_{n+1,N})>\mu_{min}>0$\, that this map must be zero if $j>1$. Thus, the entire map $\Omega^{(1)}(X/k)^{\otimes^s N}\otimes HN^1(\mathcal E)\longrightarrow gr^jHN(\mathcal E)$\, is zero. Since $N$ was choosen minimal, we get by the $N^{th}$ exact sequence of jet bundles a contradiction. Thus $j=1$\,. We get a quotient differential operator $D_1: \mathcal E/HN^1(\mathcal E)\longrightarrow \mathcal E/HN^1(\mathcal E)$\, and we argue by induction on the lenght of the HN-filtration.\\
 To the last point, if $\mathcal E$\, is semistable and a differential operator $D:\mathcal E\longrightarrow \mathcal E$\, is given, choose again $N$ minimal such that $D$ factors over $\mathcal J^N(\mathcal E/k)$\, and show, as in the first part of this proof, that the homomorphism
$$\Omega^{(1)}(X/k)^{\otimes^s N}\otimes \mathcal E\longrightarrow \mathcal E$$
is the zero map if $N>1$\,.
\end{proof}
\begin{remark}\mylabel{rem:R200} By \cite{Miyaoka}[Lecture III, 2.14 Theorem, p. 67], the condition that \\
 $\mu_{min}(\Omega^{(1)}(X/k))\geq 0$\, is equivalent to the uniruledness of $X$. 
\end{remark}
\subsection{Vector bundles with global differential operators on projective $n$-space}
\begin{proposition}\mylabel{prop:P67} Let $X=\mathbb P_k^n$\, be projective $n$-space with $n\geq 2$. Then for each  pair of locally free sheaves $\mathcal F_i, i= 1,2,$ there exists an $N=N(\mathcal F_i)$\, such that for each $N\geq N(\mathcal F_i)$ there exist   operators $D: \mathcal F_1\longrightarrow \mathcal F_2$\,  of order $\geq N$\,.
\end{proposition}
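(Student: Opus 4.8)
Write $\mathcal G:=\mathcal F_1^\vee\otimes_{\mathcal O_X}\mathcal F_2=\sheafHom_{\mathcal O_X}(\mathcal F_1,\mathcal F_2)$, let $\mathcal T:=\mathcal T^1(X/k)$ be the tangent bundle of $X=\mathbb P^n_k$, and abbreviate $DO^j:=DO^j(\mathcal F_1,\mathcal F_2)$. Since $X/k$ is smooth and the $\mathcal F_i$ are locally free, each $\mathcal J^j(\mathcal F_1/k)$, and hence each $DO^j$, is locally free, and the standard exact sequences $do^j$ display $DO^N$ as an iterated extension $DO^0=\mathcal G\subset DO^1\subset\cdots\subset DO^N$ with $DO^j/DO^{j-1}\cong\mathcal G\otimes_{\mathcal O_X}S^j\mathcal T$ for $j\ge 1$. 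The plan is to extract from the cohomology long exact sequence of $do^N$, namely
\[
H^0(X,DO^{N-1})\longrightarrow H^0(X,DO^{N})\xrightarrow{\rho_N}H^0(X,\mathcal G\otimes S^N\mathcal T)\xrightarrow{\delta_N}H^1(X,DO^{N-1}),
\]
that $\rho_N\neq 0$ for all large $N$. Indeed, by \prettyref{def:D46} a global section of $DO^N$ whose image under $\rho_N$ is nonzero is exactly a global differential operator $\mathcal F_1\to\mathcal F_2$ of order precisely $N$ (the operators of order $\le N-1$ being the image of $H^0(X,DO^{N-1})=\ker\rho_N$). So it suffices to find arbitrarily large $N$ for which $\delta_N$ is not injective.

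First I would bound $\dim_k H^1(X,DO^{N})$ uniformly in $N$. Applying $S^j$ to the Euler sequence $0\to\mathcal O_X\to\mathcal O_X(1)^{\oplus(n+1)}\to\mathcal T\to 0$ gives, for each $j\ge 1$, an exact sequence
\[
0\longrightarrow\mathcal O_X(j-1)^{\oplus\binom{j-1+n}{n}}\longrightarrow\mathcal O_X(j)^{\oplus\binom{j+n}{n}}\longrightarrow S^j\mathcal T\longrightarrow 0;
\]
tensoring with the locally free $\mathcal G$ and using Serre vanishing shows $H^i(X,\mathcal G\otimes S^j\mathcal T)=0$ for all $i\ge 1$ once $j\ge j_0(\mathcal G)$. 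Hence $C:=\sum_{j\ge 0}\dim_k H^1(X,\mathcal G\otimes S^j\mathcal T)$ is finite, and running the cohomology sequences of the $do^j$ up the filtration $DO^0\subset\cdots\subset DO^N$ gives $\dim_k H^1(X,DO^N)\le C$ for every $N$.

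Next I would make $\dim_k H^0(X,\mathcal G\otimes S^N\mathcal T)$ grow. Twisting the Euler sequence by $\mathcal O_X(-1)$ writes $\mathcal T(-1)$ as a quotient of $\mathcal O_X^{\oplus(n+1)}$, so $S^N(\mathcal T(-1))=S^N\mathcal T\otimes\mathcal O_X(-N)$ is globally generated and locally free of rank $\binom{N+n-1}{n-1}$. Choosing $m$ with $H^0(X,\mathcal G(m))\neq 0$, a nonzero section gives an inclusion $\mathcal O_X(-m)\hookrightarrow\mathcal G$ ($\mathcal G$ being torsion free), and tensoring with the locally free $S^N\mathcal T$ gives $S^N(\mathcal T(-1))\otimes\mathcal O_X(N-m)\hookrightarrow\mathcal G\otimes S^N\mathcal T$; for $N\ge m$ the source is globally generated and locally free of rank $\binom{N+n-1}{n-1}$, and a globally generated locally free sheaf of rank $r$ on $X$ has at least $r$ independent global sections. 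Thus $\dim_k H^0(X,\mathcal G\otimes S^N\mathcal T)\ge\binom{N+n-1}{n-1}$, which tends to infinity because $n\ge 2$.

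Combining these: whenever $N\ge m$ and $\binom{N+n-1}{n-1}>C$ we have $\dim_k H^0(X,\mathcal G\otimes S^N\mathcal T)>\dim_k H^1(X,DO^{N-1})$, so $\delta_N$ is not injective, $\rho_N\neq 0$, and a global differential operator $\mathcal F_1\to\mathcal F_2$ of order exactly $N$ exists; letting $N(\mathcal F_i)$ be the least such integer proves the proposition (in fact it yields an operator of order exactly $N$, hence of order $\ge N$, for every $N\ge N(\mathcal F_i)$). I expect the uniform bound $\dim_k H^1(X,DO^N)\le C$ to be the crux: it rests entirely on the vanishing $H^1(X,\mathcal G\otimes S^j\mathcal T)=0$ for $j\gg 0$, supplied by the symmetric powers of the Euler sequence together with Serre vanishing, and this, via the unbounded growth of the rank $\binom{N+n-1}{n-1}$, is where the hypothesis $n\ge 2$ is genuinely used.
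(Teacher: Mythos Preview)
Your proof is correct. It shares the paper's skeleton—analyze the long exact cohomology sequence of $do^N(\mathcal F_1,\mathcal F_2)$ and use that $H^1(X,\mathcal G\otimes S^j\mathcal T)=0$ for all large $j$—but the execution differs in two places. You derive this vanishing from the symmetric powers of the Euler sequence together with Serre vanishing, whereas the paper invokes the ampleness of $\mathcal T^1(\mathbb P^n_k/k)$ and the general vanishing/global-generation theorem for ample bundles from \cite{Lazarsfeld}. In the endgame the paper observes that the resulting surjections $H^1(X,DO^{N-1})\twoheadrightarrow H^1(X,DO^N)$ (valid once $N$ exceeds the vanishing threshold) must eventually stabilize to isomorphisms, so that $\rho_N$ is in fact \emph{surjective} for all large $N$; you instead run a dimension count, bounding $\dim_k H^1(X,DO^{N-1})\le C$ uniformly and showing $\dim_k H^0(X,\mathcal G\otimes S^N\mathcal T)\ge\binom{N+n-1}{n-1}\to\infty$, which forces $\delta_N$ to have nontrivial kernel. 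The paper's stabilization argument yields the slightly stronger conclusion that every element of $H^0(X,\mathcal G\otimes S^N\mathcal T)$ arises as a symbol (and, incidentally, goes through for $n=1$ as well); your route is more elementary, avoiding the ampleness machinery, and makes explicit where the hypothesis $n\ge 2$ enters, namely in the unbounded growth of the rank $\binom{N+n-1}{n-1}$.
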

\begin{proof} The tangent sheaf $\mathcal T^1(\mathbb P^n_k/k)$\, is ample . For each $N\in \mathbb N$, we consider the exact sequence 
$$j^N(\mathcal F_1): 0\longrightarrow \Omega^{(1)}(X/k)^{\otimes^s N}\otimes_{\mathcal O_X}\mathcal F_1\longrightarrow \mathcal J^N(\mathcal F_1/k)\longrightarrow \mathcal J^{N-1}(\mathcal F_1/k)\longrightarrow 0.$$ Taking duals and tensoring with $\mathcal F_2$\, we get exact sequences
\begin{gather*}do^N(\mathcal F_1,\mathcal F_2): 0\longrightarrow DO^{N-1}(\mathcal F_1,\mathcal F_2)\stackrel{i_N}\longrightarrow DO^N(\mathcal F_1,\mathcal F_2)\\
\stackrel{\sigma^N}
\longrightarrow \mathcal T^1(\mathbb P^n_k/k)^{\otimes^s N}\otimes Hom_{\mathcal O_X}(\mathcal F_1,\mathcal F_2)\longrightarrow 0.
\end{gather*}
We have $DO^0(\mathcal F_1,\mathcal F_2)\cong Hom_{\mathcal O_X}(\mathcal F_1,\mathcal F_2)$\,. The claim is that for $N>>0$\, 
$$H^0(\mathbb P^n_k, DO^N(\mathcal F_1,\mathcal F_2))\supsetneq H^0(\mathbb P^n_k, DO^{N-1}(\mathcal F_1,\mathcal F_2)\supsetneq H^0(\mathbb P^n_k, Hom_{\mathcal O}(\mathcal F_1,\mathcal F_2)).$$
By \cite{Lazarsfeld}[II, chapter 6.1.B, Theorem 6.1.10, p.11] for each coherent sheaf $\mathcal E$\, on $\mathbb P^n_k$\,  and for all $m\geq m(\mathcal E)$\, we have $H^i(\mathbb P^n_k, S^m\mathcal T^1(\mathbb P^n_k)\otimes \mathcal E)=0\,\,\forall i>0$\, and the sheaf $S^m\mathcal T^1(\mathbb P^n_k)\otimes \mathcal E$\, is globally generated. Putting $\mathcal E=Hom_{\mathcal O_X}(\mathcal F_1,\mathcal F_2)$ and $i=1$\, we get 
$$H^1(\mathbb P^n_k, S^m\mathcal T^1(\mathbb P^n_k/k)\otimes Hom_{\mathcal O_X}(\mathcal F_1,\mathcal F_2))=0\,\, \forall m\geq m(\mathcal E).$$
Writing out the long exact cohomology sequences for the exact sequence $do^N(\mathcal F_1,\mathcal F_2), N\geq m(\mathcal E)$\,, we get
\begin{gather*} 0\longrightarrow H^0(\mathbb P^n_k, DO^{N-1}(\mathcal F_1,\mathcal F_2))\longrightarrow H^0(\mathbb P^n_k, DO^N(\mathcal F_1,\mathcal F_2))\\
\longrightarrow H^0(\mathbb P^n_k, S^N\mathcal T^1(\mathbb P^n_k/k)\otimes Hom_{\mathcal O_X}(\mathcal F_1,\mathcal F_2))\\
\longrightarrow H^1(\mathbb P^n_k, DO^{N-1}(\mathcal F_1,\mathcal F_2)))\longrightarrow  H^1(\mathbb P^n_k, DO^N(\mathcal F_1,\mathcal F_2))\\
\longrightarrow H^1(\mathbb P^n_k, S^N\mathcal T^1(\mathbb P^n_k/k)\otimes Hom_{\mathcal O_X}(\mathcal F_1,\mathcal F_2))=0.
\end{gather*}
 The $k$-linear maps $H^1(\mathbb P^n_k, i_N)$\, are for $N>m(\mathcal F_i)$\, surjective. Thus, there must be an $N(\mathcal F_i)\geq m(\mathcal F_i)$\, such that $H^1(\mathbb P^n_k,i_N)$\, are isomorphisms for $N\geq N(\mathcal F_i)$\,. Looking into the long exact cohomology sequence, we see that for such $N$, the maps 
 $$H^0(\mathbb P^n_k, DO^N(\mathcal F_1,\mathcal F_2))\longrightarrow H^0(\mathbb P^n_k, S^N\mathcal T^1(\mathbb P^n_k/k)\otimes_{\mathcal O_X}Hom_{\mathcal O_X}(\mathcal F_1,\mathcal F_2))\longrightarrow 0$$ are surjective.  The $k$-vector space dimension $h^0(\mathbb P^n_k, S^N\mathcal T^1(\mathbb P^n_k/k))$\, is always greater than zero for $N>>0$\,, because by \cite{Lazarsfeld}[II, chapter 6.1.B, Theorem 6.1.10, p.11] the sheaf $S^N\mathcal T^1(\mathbb P^n_k/k)\otimes Hom_{\mathcal O_X}(\mathcal F_1,\mathcal F_2)$\, is globally generated for $N>>0$\,. So there are elements in $H^0(\mathbb P^n_k, DO^N(\mathcal F_1,\mathcal F_2))$\, that do not lie in $H^0(\mathbb P^n_k, DO^{N-1}(\mathcal F_1,\mathcal F_2))$\,. So for $N>>0$\,, we have strict inclusions 
 $$H^0(\mathbb P^n_k, \Hom_{\mathcal O_X}(\mathcal F_1,\mathcal F_2))\subsetneq H^0(\mathbb P^n_k, DO^{N-1}(\mathcal F_1,\mathcal F_2))\subsetneq H^0(\mathbb P^n_k, DO^{N}(\mathcal F_1,\mathcal F_2)).$$ 
\end{proof}
\begin{remark}\mylabel{rem:R100} By global generation  of the sheaf $DO^N(\mathcal F_1,\mathcal F_2)$\, after a sufficiently high ample twist, on gets immediately differential operators $\mathcal E\longrightarrow \mathcal E(n)$\, for $n>>0$\,. This result implies, that for each $n>0$\, there exist differential operatos of arbitrary high degree $\mathcal E\longrightarrow \mathcal E(-n)$\,.
\end{remark}
 Using the ampleness of $\mathcal T^1(\mathbb P^n/k)$ we  now want to  estimate the growth of $DO^N(\mathcal F_1,\mathcal F_2)$\, as $N$\, tends to infinity.\\
 Let $\mathcal E_1$\, and $\mathcal E_2$\, be locally free sheaves on $\mathbb P^n_k$\,, $n\geq 2$\, be given. By the ampleness of $\mathcal T^1_{\mathbb P^n_k/k}$\,, we know that for some $N\in \mathbb N_0$\, and all $d\geq N$, we have that $H^0(\mathbb P^n_k, S^d\mathcal T^1(\mathbb P^n/k)\otimes \Hom_{\mathcal O}(\mathcal E_1,\mathcal E_2))$\, is globally generated and all higher cohomology groups vanish. As we have shown, there is $M\in \mathbb N$\,, that only depends on $\mathcal E_1,\mathcal E_2$\, such that the sequence
 \begin{gather*}0\longrightarrow H^0(\mathbb P^n_k, DO^{d-1}(\mathcal E_1,\mathcal E_2))\longrightarrow H^0(\mathbb P^n_k, DO^d(\mathcal E_1,\mathcal E_2))\\
 \longrightarrow H^0(\mathbb P^n_k, S^d\mathcal T^1\otimes \Hom_{\mathcal O_{\mathbb P^n_k}}(\mathcal E_1,\mathcal E_2))\longrightarrow 0
 \end{gather*}
 is exact for all $d\geq M$\,. For $d>>M$\, we can thus write for $h^0(\mathbb P^n_k, DO^d(\mathcal E_1,\mathcal E_2))$\, 
 \begin{gather*}h^0(\mathbb P^n_k, DO^d(\mathcal E_1,\mathcal E_2))= h^0(\mathbb P^n_k, DO^M(\mathcal F_1,\mathcal F_2))\\
 + \sum_{M+1\leq k\leq d}h^0(\mathbb P^n_k, S^k\mathcal T^1(\mathbb P^n_k/k)\otimes \Hom_{\mathcal O_{\mathbb P^n}}(\mathcal E_1,\mathcal E_2))=\\
  h^0(\mathbb P^n_k, DO^M(\mathcal F_1,\mathcal F_2))+\sum_{M+1\leq k\leq d}\chi(\mathbb P^n_k,S^k\mathcal T^1(\mathbb P^n_k/k)\otimes \Hom_{\mathcal O}(\mathcal E_1,\mathcal E_2)).
 \end{gather*} 
 If we tensor the symmetric power of the Euler sequence ($\mathbb P^n_k=\mathbb P(V)$)
 $$0\longrightarrow S^{k-1}V\otimes \mathcal O_{\mathbb P}(-1)\longrightarrow S^k(V)\otimes \mathcal O_{\mathbb P}\longrightarrow S^k\mathcal T^1(\mathbb P^n/k)(-k)\longrightarrow 0,\quad\text{or}$$
 $$0\longrightarrow S^{k-1}V\otimes \mathcal O(k-1)\longrightarrow S^k(V)\otimes \mathcal O(k)\longrightarrow S^k\mathcal T^1(\mathbb P^n_k/k)\longrightarrow 0$$
 (see \cite{Huybrechts-Lehn}[chapter 1.4, p. 19]) with $\Hom_{\mathcal O}(\mathcal E_1,\mathcal E_2)$\, and use the additivity of the Euler characteristic, we get 
 \begin{gather*}\chi(\mathbb P^n_k, S^k\mathcal T^1(\mathbb P^n_k/k)\otimes_{\mathcal O}\Hom_{\mathcal O}(\mathcal E_1,\mathcal E_2))=\chi(\mathbb P^n_k, S^k(V)\otimes \mathcal O(k)\otimes \Hom_{\mathcal O}(\mathcal E_1,\mathcal E_2))\\
 - \chi(\mathbb P^n_k, S^{k-1}V\otimes \mathcal O(k-1)\otimes \Hom_{\mathcal O}(\mathcal E_1,\mathcal E_2))=\\
 \binom{n+k}{k}\cdot \chi(\mathbb P^n_k, \Hom_{\mathcal O}(\mathcal E_1,\mathcal E_2(k)))
 - \binom{n+k-1}{k-1}\cdot \chi(\mathbb P^n, \Hom_{\mathcal O}(\mathcal E_1,\mathcal E_2(k-1))).
 \end{gather*}
  Summing over $M+1\leq k\leq d$\, we get for $d>>M$\, 
 \begin{gather*}h^0(\mathbb P^n_k, DO^d(\mathcal E_1,\mathcal E_2))=h^0(\mathbb P^n_k, DO^M(\mathcal E_1,\mathcal E_2))\\
 +\sum_{M+1\leq k\leq d}(\binom{n+k}{k}\cdot \chi(\mathbb P^n_k, \Hom_{\mathcal O}(\mathcal E_1,\mathcal E_2(k)))\\
 -\binom{n+k-1}{k-1}\cdot\chi(\mathbb P^n_k, \Hom_{\mathcal O}(\mathcal E_1,\mathcal E_2(k-1))))\\
 =
 h^0(\mathbb P^n_k, DO^M(\mathcal E_1,\mathcal E_2))\\
 +\binom{n+d}{d}\cdot \chi(\mathbb P^n_k, \Hom_{\mathcal O}(\mathcal E_1,\mathcal E_2(d)))-\binom{n+M}{M}\cdot\chi(\mathbb P^n_k, \Hom_{\mathcal O}(\mathcal E_1,\mathcal E_2(M))).
 \end{gather*}
 We thus have proved the following 
  \begin{proposition}\mylabel{prop:P2612} Let $\mathcal E_1,\mathcal E_2$ be locally free sheaves on $\mathbb P^n_k, n\geq 2$\, . Let $F(N): = h^0(\mathbb P^n_k, DO^N(\mathcal E_1,\mathcal E_2)).$ There is a polynomial of degree $2n$, $P(N) =P_{(\mathcal E_1,\mathcal E_2)}(N)$\, such that for $N>>0$\,  
 $$F(N)= P(N).$$ 
 The polynomial $P(N)$\, is  explicitely given by 
 \begin{gather*}P(N):= \binom{n+N}{N}\cdot \chi(\mathbb P^n_k, \Hom_{\mathcal O}(\mathcal E_1,\mathcal E_2)(N))+\\
 h^0(\mathbb P^n_k, DO^M(\mathcal E_1,\mathcal E_2)) - \binom{n+M}{M}\cdot \chi(\mathbb P^n_k, \Hom_{\mathcal O}(\mathcal E_1,\mathcal E_2)(M)),
 \end{gather*}
 where $M=M(\mathcal E_1,\mathcal E_2)$\, is a fixed natural number.
 \end{proposition}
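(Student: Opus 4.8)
The plan is essentially to read the statement off the computation carried out in the paragraphs immediately preceding it, so the proof is organized bookkeeping built around two telescoping sums. First I would use the ampleness of $\mathcal T^1(\mathbb P^n_k/k)$ together with \cite{Lazarsfeld}[II, Theorem~6.1.10] to fix a natural number $M=M(\mathcal E_1,\mathcal E_2)$ such that for every $k\geq M$ the sheaf $S^k\mathcal T^1(\mathbb P^n_k/k)\otimes \Hom_{\mathcal O}(\mathcal E_1,\mathcal E_2)$ is globally generated and has vanishing higher cohomology. Feeding this vanishing into the long exact cohomology sequence of $do^k(\mathcal E_1,\mathcal E_2)=\Hom_{\mathcal O_X}(j^k(\mathcal E_1/k),\mathcal E_2)$ produces, for all $k\geq M$, the short exact sequences
$$0\longrightarrow H^0(\mathbb P^n_k,DO^{k-1}(\mathcal E_1,\mathcal E_2))\longrightarrow H^0(\mathbb P^n_k,DO^{k}(\mathcal E_1,\mathcal E_2))\longrightarrow H^0(\mathbb P^n_k,S^k\mathcal T^1(\mathbb P^n_k/k)\otimes \Hom_{\mathcal O}(\mathcal E_1,\mathcal E_2))\longrightarrow 0,$$
and hence $F(d)=h^0(\mathbb P^n_k,DO^M(\mathcal E_1,\mathcal E_2))+\sum_{k=M+1}^{d}h^0(\mathbb P^n_k,S^k\mathcal T^1(\mathbb P^n_k/k)\otimes \Hom_{\mathcal O}(\mathcal E_1,\mathcal E_2))$ for all $d\geq M$.

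Second, since for $k\geq M$ the cohomology of the summand is concentrated in degree zero, I would replace each $h^0$ by $\chi$, tensor the symmetric powers of the (twisted) Euler sequence on $\mathbb P^n_k=\mathbb P(V)$ with $\Hom_{\mathcal O}(\mathcal E_1,\mathcal E_2)$, and use additivity of the Euler characteristic on short exact sequences (plus $\dim S^kV=\binom{n+k}{k}$) to obtain
$$\chi(\mathbb P^n_k,S^k\mathcal T^1(\mathbb P^n_k/k)\otimes \Hom_{\mathcal O}(\mathcal E_1,\mathcal E_2))=\binom{n+k}{k}\chi(\mathbb P^n_k,\Hom_{\mathcal O}(\mathcal E_1,\mathcal E_2)(k))-\binom{n+k-1}{k-1}\chi(\mathbb P^n_k,\Hom_{\mathcal O}(\mathcal E_1,\mathcal E_2)(k-1)).$$
Summing this over $M+1\leq k\leq d$, the terms cancel in consecutive pairs and only the boundary contributions at $k=d$ and $k=M$ survive; combined with the additive constant $h^0(\mathbb P^n_k,DO^M(\mathcal E_1,\mathcal E_2))$ this is exactly the claimed closed form $P(d)$.

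Third, I would verify that $P(N)$ is genuinely a polynomial of degree $2n$. The function $N\mapsto \chi(\mathbb P^n_k,\Hom_{\mathcal O}(\mathcal E_1,\mathcal E_2)(N))$ is the Hilbert polynomial of the locally free sheaf $\Hom_{\mathcal O}(\mathcal E_1,\mathcal E_2)$, hence a polynomial in $N$ of degree exactly $n$ with leading coefficient $(\operatorname{rk}\mathcal E_1)(\operatorname{rk}\mathcal E_2)/n!>0$; and $\binom{n+N}{N}=(N+1)(N+2)\cdots(N+n)/n!$ is a polynomial in $N$ of degree exactly $n$ with leading coefficient $1/n!>0$. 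Their product therefore has degree exactly $2n$, while the two remaining summands of $P(N)$ are constants in $N$, so $\deg P=2n$ and $F(N)=P(N)$ for $N$ sufficiently large (for instance $N>M$). The only point requiring a little care is the uniform choice of $M$: it must be large enough that the Lazarsfeld vanishing holds for \emph{all} $k\geq M$ at once, so that the first telescoping is legitimate for every $d\geq M$; this is precisely the content of the cited ampleness statement. Granted that, I expect no real obstacle, since everything else is the formal manipulation of the two exact sequences $do^k(\mathcal E_1,\mathcal E_2)$ and the symmetrized Euler sequence, together with the elementary remark that the two degree-$n$ factors cannot have cancelling leading terms because both leading coefficients are strictly positive.
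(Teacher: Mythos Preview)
Your outline matches the paper's own argument almost step for step: reduce to the telescoping sum via the exact sequences $do^k(\mathcal E_1,\mathcal E_2)$, convert $h^0$ to $\chi$ using the Lazarsfeld vanishing, compute $\chi(S^k\mathcal T^1\otimes\Hom)$ from the symmetrized Euler sequence, and telescope. Your verification that $\deg P=2n$ is even a little more explicit than the paper's.

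There is one genuine gap, however, in the passage from the Lazarsfeld vanishing to the short exact sequence on $H^0$. The vanishing of $H^1(\mathbb P^n_k,S^k\mathcal T^1\otimes\Hom_{\mathcal O}(\mathcal E_1,\mathcal E_2))$ by itself only tells you that the map $H^1(\mathbb P^n_k,DO^{k-1})\to H^1(\mathbb P^n_k,DO^{k})$ is \emph{surjective}; it does not force the connecting map $H^0(S^k\mathcal T^1\otimes\Hom)\to H^1(DO^{k-1})$ to vanish. What is actually needed (and what the paper imports from the proof of \prettyref{prop:P67}) is the further observation that a chain of surjections of finite-dimensional $k$-vector spaces $H^1(DO^{k-1})\twoheadrightarrow H^1(DO^{k})\twoheadrightarrow\cdots$ must stabilize, so that after enlarging $M$ these surjections become isomorphisms and the connecting maps are zero. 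Your final paragraph locates the ``care'' in the wrong place: the uniformity of the Lazarsfeld bound is automatic from the cited theorem, while the stabilization of $H^1(DO^k)$ is the extra ingredient you are missing. Once you insert this one sentence, your proof is complete and essentially identical to the paper's.
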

 \begin{proof}
 \end{proof} 
 For the special case where $\mathcal E_1=\mathcal O(m_1)$\, and $\mathcal E_2=\mathcal O(m_2)$\,, the formula then reads, 
 \begin{gather*}h^0(\mathbb P^n_k, DO^d(\mathcal O(m_1),\mathcal O(m_2)))= h^0(\mathbb P_k^n, DO^M(\mathcal O(m_1), \mathcal O(m_2)))\\
 + \binom{n+d}{d}\cdot \chi(\mathbb P^n, \mathcal O(m_2-m_1+d))-\binom{n+M}{M}\cdot \chi(\mathbb P^n, \mathcal O(m_2-m_1+M)).
 \end{gather*}
 \subsection{Elliptic operators in algebraic geometry}
\begin{definition}\mylabel{def:D13113}(Definition of the symbol of a differential operator)\\
Let $X/k$\, be a smooth variety and $D: \mathcal E_1\longrightarrow \mathcal E_2$\, be a differential operator of order $N\in \mathbb N$\,. Consider the standard exact sequence
\begin{gather*}do^N(\mathcal E_1,\mathcal E_2): 0\longrightarrow DO^{N-1}_{(X/k)}(\mathcal E_1,\mathcal E_2)\longrightarrow DO^N_{(X/k}(\mathcal E_1,\mathcal E_2)\\
\stackrel{\sigma_N}\longrightarrow Hom_{\mathcal O_X}(\mathcal E_1,\mathcal E_2)\otimes_{\mathcal O_X}S^N\mathcal T^1(X/k).
\end{gather*}
The element $\sigma^N(D)\in Hom_{\mathcal O_X}(\mathcal E_1,\mathcal E_2)\otimes_{\mathcal O_X}S^N\mathcal T^1(X/k)$\, is called the symbol of the differential operator $D$.
\end{definition}
Recall, that if $\mathcal F$\, is a coherent sheaf on $X$ and $F: =\Spec_X\text{Sym}^{\bullet}(\mathcal F)$\, with projection $\pi: F\longrightarrow X$\, is the associated affine bundle, for each $N\in \mathbb N$\,, there is a canonical homomorphism
$c_{N,\mathcal F}:\quad\pi^*S^N\mathcal F\longrightarrow \mathcal O_F$\,.  Zariski locally, if $X=\Spec A$\, and $\mathcal F$\, corresponds to the $A$-module $M$\,, we have 
$$\Gamma(\Spec A, \mathcal O_F)=\text{Sym}^{\bullet}M\quad  \text{and}\quad \pi^*S^N\mathcal F\mid_{\Spec A}=S^NM\otimes_A\text{Sym}^{\bullet}M$$ and the required map is simply the tensor multiplication map
$S^NM\otimes_A\text{Sym}^{\bullet}M\longrightarrow \text{Sym}^{\bullet}M.$\\
In our case, $\mathcal F=\mathcal T^1(X/k)$\, and $ T^{1*}(X/k):=\Spec_X\text{Sym}^{\bullet}\mathcal T^1_{X/k}$\, is the classical cotangent bundle of $X$ with projection $\pi_X: T^{1*}(X/k)\longrightarrow X$\,. Put $T'(X/S):=T^{1,*}(X/S)\backslash s_0(X)$\, where $s_0: X\hookrightarrow T^{1,*}(X/k)$\, is the zero section with same projection $\pi_X: T'(X/k)\longrightarrow X$\,.\\
 We have the following classical
\begin{definition}\mylabel{def:D2512}(Definition of an elliptic operator in algebraic geometry)\\
 With notation as  just introduced, let $X/k$\, be a smooth scheme of finite type over the base field $k$\, , $\mathcal E_1,\mathcal E_2$\, be locally free sheaves on $X$ and $D: \mathcal E_1\longrightarrow \mathcal E_2$\, be a differential operator of order $N$. The operator $D$ is called elliptic, if 
 $$c_{N,\mathcal T^1}\circ \pi_X^*(\sigma^N(D)): \pi_X^*\mathcal E_1\longrightarrow \pi_X^*\mathcal E_2$$ is an isomorphism of locally free sheaves on $T'(X/S)$\,.
\end{definition}
This is just an adoptation of the classical definition of an elliptic operator in differential geometry to the setting of locally free sheaves. Observe that $\mathcal E_1$\, and $\mathcal E_2$\, must then  have the same rank.   
\begin{proposition}\mylabel{prop:P13114} Let $X/k$\, be a smooth complete variety such that\\
 $\mu_{min}(\Omega^{(1)}(X/k))>0$\, with respect to some polarization $H$\,. Then, there is no locally free sheaf $\mathcal E$\, plus an elliptic differential operator $D: \mathcal E\longrightarrow \mathcal E$\, on $X$.
\end{proposition}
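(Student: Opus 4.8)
My plan is to argue by contradiction, passing to the principal symbol of a hypothetical elliptic operator, forming its determinant, and extracting from it a nonzero global section of a high symmetric power of the tangent sheaf; the assumption $\mu_{min}(\Omega^{(1)}(X/k))>0$ will then make such a section impossible. So suppose $\mathcal E\neq 0$ is locally free on $X$ and $D\colon\mathcal E\to\mathcal E$ is elliptic of order $N$, which by \prettyref{def:D13113} lies in $\mathbb N$ and hence satisfies $N\geq 1$. Write $r=\operatorname{rk}\mathcal E\geq 1$ and $n=\dim X$. First I would dispose of the trivial case $n=0$: then $\Omega^{(1)}(X/k)=0$, so $\mathcal J^N(\mathcal E/k)=\mathcal E$, no operator has positive order, and there is nothing to prove. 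So assume $n\geq 1$; then $T'(X/k)=T^{1,*}(X/k)\setminus s_0(X)$ is nonempty, its fibre over each point of $X$ being $\mathbb A^n\setminus\{0\}$.

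Next I would take the determinant of the symbol. Recall from \prettyref{def:D13113} that $\sigma^N(D)\in\Hom_{\mathcal O_X}(\mathcal E,\mathcal E)\otimes_{\mathcal O_X}S^N\mathcal T^1(X/k)$; pulling back to $T^{1,*}(X/k)$ and composing with the tautological map $c_{N,\mathcal T^1}$ gives an $\mathcal O_{T^{1,*}(X/k)}$-linear endomorphism $\Phi:=c_{N,\mathcal T^1}\circ\pi_X^*(\sigma^N(D))$ of the rank-$r$ locally free sheaf $\pi_X^*\mathcal E$. Since $c_{N,\mathcal T^1}$ is, along the fibres, homogeneous of degree $N$, and $\pi_{X*}\mathcal O_{T^{1,*}(X/k)}=\bigoplus_{m\geq 0}S^m\mathcal T^1(X/k)$, the determinant $\det\Phi$ lies in the degree-$Nr$ summand $H^0(X,S^{Nr}\mathcal T^1(X/k))$ of $\Gamma(T^{1,*}(X/k),\mathcal O_{T^{1,*}(X/k)})$. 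By \prettyref{def:D2512}, ellipticity of $D$ says exactly that $\Phi$ is an isomorphism over $T'(X/k)$, so $\det\Phi$ restricts to a unit on the nonempty open $T'(X/k)$; in particular $\det\Phi\neq 0$, and hence $H^0(X,S^{Nr}\mathcal T^1(X/k))\neq 0$.

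The contradiction then comes from a slope estimate. On the smooth variety $X$ one has $\mathcal T^1(X/k)=\Omega^{(1)}(X/k)^{\vee}$, so $\mu_{max}(\mathcal T^1(X/k))=-\mu_{min}(\Omega^{(1)}(X/k))<0$. Using that $\operatorname{char}k=0$ — so that symmetric powers of semistable sheaves remain semistable, applied to the Harder--Narasimhan pieces of $\mathcal T^1(X/k)$ exactly as already invoked in the proof of \prettyref{prop:P60} — I get $\mu_{max}(S^{Nr}\mathcal T^1(X/k))=Nr\cdot\mu_{max}(\mathcal T^1(X/k))<0$. But a coherent sheaf $\mathcal F$ on the (projective) polarized variety $X$ with $\mu_{max}(\mathcal F)<0$ has $H^0(X,\mathcal F)=0$: a nonzero section would give an inclusion $\mathcal O_X\hookrightarrow\mathcal F$ whose image is isomorphic to $\mathcal O_X$, a subsheaf of slope $0>\mu_{max}(\mathcal F)$, which is absurd. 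Hence $H^0(X,S^{Nr}\mathcal T^1(X/k))=0$, contradicting the preceding paragraph; this proves the proposition.

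Finally I would record the alternative, more structural route, since it explains the phenomenon: by \prettyref{prop:P60}, $D$ respects the Harder--Narasimhan filtration $HN^{\bullet}(\mathcal E)$, and because $\mu_{min}(\Omega^{(1)}(X/k))>0$ the induced operators $\overline D_i\colon gr^iHN(\mathcal E)\to gr^iHN(\mathcal E)$ are $\mathcal O_X$-linear, i.e.\ of order $0<N$, so their $N$-th symbols vanish; by functoriality of the symbol under restriction to subsheaves and passage to quotients (the jet functor $\mathcal J^N(-/k)$ being exact since $X$ is smooth over $k$), $\Phi$ carries $\pi_X^*HN^i(\mathcal E)$ into $\pi_X^*HN^{i-1}(\mathcal E)$ and is therefore nilpotent, which again cannot happen for an isomorphism over the nonempty $T'(X/k)$. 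I expect the only genuinely non-formal ingredient in either version to be the use of characteristic zero to control the symmetric powers of the destabilizing pieces of $\mathcal T^1(X/k)$; the degenerate cases $n=0$ and $\mathcal E=0$, and the bookkeeping placing $\det\Phi$ in $H^0(X,S^{Nr}\mathcal T^1(X/k))$, are routine.
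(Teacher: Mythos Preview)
Your proposal is correct. Your main argument --- forming $\det\Phi\in H^0(X,S^{Nr}\mathcal T^1(X/k))$ and killing it via $\mu_{max}(S^{Nr}\mathcal T^1(X/k))=Nr\cdot\mu_{max}(\mathcal T^1(X/k))=-Nr\cdot\mu_{min}(\Omega^{(1)}(X/k))<0$ --- is a genuinely different route from the paper's. The paper instead invokes \prettyref{prop:P60} to see that $D$ preserves the Harder--Narasimhan filtration with $\mathcal O_X$-linear graded pieces, then passes to a local trivialization adapted to $\mathcal E_1=HN^1(\mathcal E)$ and observes that the symbol matrix has a zero $k\times k$ diagonal block, hence cannot be fibrewise invertible off the zero section. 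Your ``alternative, more structural route'' at the end is exactly this argument, phrased globally as nilpotence of $\Phi$ with respect to $\pi_X^*HN^\bullet(\mathcal E)$.

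What each buys: your determinant argument is self-contained and bypasses \prettyref{prop:P60} altogether, reducing ellipticity directly to the non-existence of sections of high symmetric powers of $\mathcal T^1(X/k)$; this is arguably the cleaner obstruction and would generalise immediately to elliptic operators $\mathcal E_1\to\mathcal E_2$ between distinct bundles of the same rank (where the HN-filtration argument is less natural). The paper's approach, on the other hand, explains structurally \emph{why} the symbol fails --- it is block-triangular with vanishing diagonal blocks --- and ties the result to the behaviour of all differential operators on $\mathcal E$, not just elliptic ones. Both arguments rest on the same characteristic-zero input (semistability of symmetric/tensor products), which is implicit in the statement via its dependence on \prettyref{prop:P60}.
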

\begin{proof} The proof is relatively straight forward. Let $\mathcal E_1$\, be the maximal destabilizing subsheaf. By \prettyref{prop:P60} , $D$ respects the Harder Narasimhan-filtration and $D|_{\mathcal E_1}$\, is $\mathcal O_X$-linear.  The sheaves $\mathcal E_1$\, and $\mathcal E/\mathcal E_1$\, are torsion free and are locally free outside a codimension $2$ subset of $X$. So there exists a Zariski-open $\Spec A\subset X$\, such that $\mathcal E$\, and $\mathcal E_1$\, and $\mathcal E/\mathcal E_1$\,  are free on $X$. Furthermore, by shrinking $\Spec A$\,, we may assume that $\mathcal T^1(X/k)$\, is free on $\Spec A$\,. Choose a trivialization 
\begin{gather*}\mathcal E|_{\Spec A}\cong A^{\oplus n}\cong A^{\oplus k}\oplus A^{\oplus l} \,\,\text{with}\,\,\\
\mathcal E_1|_{\Spec A}\cong A^{\oplus k}\,\,\text{and}\,\, \mathcal E/\mathcal E_1|_{\Spec A}\cong A^{\oplus l}.
\end{gather*}
The symbol $\sigma(D)$\, is given by an $n\times n$-matrix of $N^{th}$-order differential operators on $\Spec A$\,. But  the $(k\times k)$-submatrix that corresponds to $\sigma(D)|_{\mathcal E_1}$\, is the zero matix since $D$ restricted to $\mathcal E_1$\, is $\mathcal O_X$-linear. By definition of ellipticity, the symbol $\sigma(D)$\, can never be a fibrewise isomorphism outside the zero section.
\end{proof}
\begin{remark}This just says, that for "most" varieties, e.g., if $\Omega^{(1)}(X/k)$\, is ample, elliptic operators on locally free sheaves do not exist. So there is no hope to prove the existence of an algebraic Atiyah-Singer-Index theorem that is equivalent to the Hirzebruch-Riemann-Roch theorem.
\end{remark}
In order to give an example of an ellitic operator on a smooth complete variety, we prove 
\begin{proposition}\mylabel{prop:P26113} Let $A/\mathbb C$\, be an abelian variety, or, more generally a complex torus. Then, there exist elliptic operators $\mathcal O_A\longrightarrow \mathcal O_A$\, of arbitrary high order.
\end{proposition}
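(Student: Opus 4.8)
The plan is to exploit the triviality of the tangent sheaf of a complex torus. Writing $A=\mathbb C^g/\Lambda$, the invariant holomorphic vector fields $\partial_1,\dots,\partial_g$ coming from the Lie algebra of $A$ form a global $\mathcal O_A$-basis of $\mathcal T^1(A/\mathbb C)$ and pairwise commute; hence any homogeneous polynomial $p\in\mathbb C[\xi_1,\dots,\xi_g]$ of degree $N$ defines a genuine global differential operator $D_p=p(\partial_1,\dots,\partial_g)\colon\mathcal O_A\longrightarrow\mathcal O_A$ of order $N$ with symbol $\sigma^N(D_p)=p$. Since $A$ is proper, $\Gamma(A,\mathcal O_A)=\mathbb C$, so $\Gamma(A,S^N\mathcal T^1(A/\mathbb C))=S^N(\mathbb C^g)$ consists exactly of the constant-coefficient homogeneous degree-$N$ symbols; thus every order-$N$ operator $\mathcal O_A\to\mathcal O_A$ has such a $p$ as its symbol, with no freedom to let the coefficients vary over $A$.

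By \prettyref{def:D2512}, with $\mathcal E_1=\mathcal E_2=\mathcal O_A$ the ellipticity map $c_{N,\mathcal T^1}\circ\pi_X^*(\sigma^N(D_p))$ is multiplication on $\mathcal O_{T'(A/\mathbb C)}$ by the function whose value at a point $(x,\xi)$ of the punctured cotangent bundle is $p(\xi)$. Using the trivialization $T'(A/\mathbb C)\cong A\times(\mathbb C^g\setminus\{0\})$, this is an isomorphism of line bundles precisely when it is a unit, i.e. when $p(\xi)\neq 0$ for every $\xi\in\mathbb C^g\setminus\{0\}$. So the whole problem reduces to producing, for every $N$, a homogeneous degree-$N$ polynomial $p$ in $g$ variables with no nontrivial zero.

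For a torus of dimension one this is immediate: take $p(\xi)=\xi_1^{\,N}$, so that $D=\partial_1^{\,N}$ is a global operator of order $N$ whose symbol vanishes only at the origin of the cotangent fibre; hence $D$ is elliptic of order $N$, and as $N$ is arbitrary this settles the statement for all one-dimensional complex tori, in particular all elliptic curves. First I would record this case in full, then turn to the general torus.

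The main obstacle is exactly the higher-dimensional case. Over $\mathbb C$ a homogeneous polynomial of degree $N\ge 1$ in $g\ge 2$ variables always has a nontrivial zero, since its vanishing locus in $\mathbb P^{g-1}$ is a nonempty hypersurface; thus no scalar symbol can be nowhere-vanishing once $g\ge 2$, and the naive constant-coefficient construction cannot by itself furnish an elliptic operator $\mathcal O_A\to\mathcal O_A$ of positive order. This is the crux: the properness of $A$ forces the symbol to be globally constant, removing any possibility of twisting it fibrewise, and this collides with the geometry of the complex cotangent space. I therefore expect the decisive and most delicate point of any proof valid for $g\ge 2$ to be finding a way around this obstruction — whether by a more refined notion of the relevant locus in $T'(A/\mathbb C)$ or by reducing, via an invariant one-dimensional subtorus direction, to the dimension-one construction above.
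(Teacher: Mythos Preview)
Your approach is exactly the paper's: write $A=\mathbb C^g/\Lambda$, observe that constant--coefficient operators on $\mathbb C^g$ are translation invariant and hence descend to $A$, and then appeal to the existence of constant--coefficient elliptic operators on $\mathbb C^g$. The paper's entire proof is those two sentences; it does not go beyond what you wrote in your first paragraph.

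The obstruction you isolate for $g\geq 2$ is genuine, and the paper does not address it. With the paper's own \prettyref{def:D2512}, ellipticity of $D_p:\mathcal O_A\to\mathcal O_A$ requires the homogeneous symbol $p\in S^N(\mathbb C^g)=\Gamma(A,S^N\mathcal T^1(A/\mathbb C))$ to be a unit on the \emph{algebraic} punctured cotangent bundle $T'(A/\mathbb C)\cong A\times(\mathbb A^g_{\mathbb C}\setminus\{0\})$, i.e.\ to have no nonzero complex zero. For $g\geq 2$ and $N\geq 1$ this is impossible, since $V(p)\subset\mathbb P^{g-1}_{\mathbb C}$ is a nonempty hypersurface. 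The paper's closing line, ``on $\mathbb C^n$ one can construct for arbitrary $N>>0$ elliptic operators with constant coefficients'', is true only in the classical real--variable sense (e.g.\ powers of the Laplacian, whose symbol $(\sum\xi_i^2)^k$ is nonvanishing on $\mathbb R^n\setminus\{0\}$ but vanishes on the complex quadric $\sum\xi_i^2=0$). So the paper silently switches from its algebraic definition to the differential--geometric one; your analysis shows that, taken literally with \prettyref{def:D2512}, the proposition holds for $g=1$ and fails for $g\geq 2$. You have not missed an idea --- you have located a gap in the paper's argument.
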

\begin{proof} Let $A=\mathbb C^n/\Lambda$\, with $\Lambda$\, generated by the vectors $\lambda_j+i\mu_j, j=1,...,2n$\,. Then, if $D$ is a differential operator on $\mathbb C^n$\, with constant coefficents, then $D$ is obviously translation invariant, since $d^1z_i=d^1(z_i+c)$\, for all $c\in \mathbb C$\, and thus $D$ descends to a differential operator $D_A: \mathcal O_A\longrightarrow \mathcal O_A$\,.\\
Now, on $\mathbb C^n$\, on can construct for arbitrary $N>>0$\, elliptic operators with constant coefficients. 
\end{proof} 
\bibliography{GlobalDiffops}
\bibliographystyle{plain}
\noindent
\emph{E-Mail-adress:}nplc@online.de
\end{document}